\newtheorem{Theorem}{Theorem}
\newtheorem{Thm}{Theorem}[section]
\newtheorem{Lem}[Thm]{Lemma}
\newtheorem{Con}[Thm]{Conjecture}
\newtheorem{Cor}[Thm]{Corollary}% von mir eingefügt
\theoremstyle{definition}
\newtheorem{Def}[Thm]{Definition}
\newtheorem{Not}[Thm]{Notation}
\newtheorem{example}[Thm]{Example}
\theoremstyle{remark}
\newtheorem{Rem}[Thm]{Remark}
\numberwithin{equation}{section}
\newcommand{\ina}{infinite non-affine } % infinite non-affine
\newcommand{\id}{\mathds{1}}
\newcommand{\EE}{\mathbb{E}}
\newcommand{\NN}{\mathbb{N}}
\newcommand{\bs}{\mathbf{s}}
\newcommand{\sib}{single braided }
\DeclarePairedDelimiter\floor{\lfloor}{\rfloor}
\DeclareMathOperator{\Sym}{Sym}
\definecolor{amethyst}{rgb}{0.6, 0.4, 0.8}
\definecolor{kellygreen}{rgb}{0.3, 0.73, 0.09}
\begin{document}

\title{Powers of Coxeter elements with unbounded reflection length}

%    Information for first author
\author{Marco Lotz}
%    Address of record for the research reported here
\address{Fakultät für Mathematik, Otto-von-Guericke-
University Magdeburg, 
Universitätsplatz 2,
39106 Magdeburg}

\email{marco.lotz@ovgu.de}

%    Current address
\curraddr{Heidelberg University,
Faculty of Mathematics and Computer Science, 
Im Neuenheimer Feld 205,
69120 Heidelberg}
%    \thanks will become a 1st page footnote.
% \thanks{Support information for first author}

\thanks{The research of the author is supported in part by DFG Grant – 314838170, GRK 2297 MathCoRe.}

%    General info
\subjclass[2020]{Primary 20F55, 20F05; Secondary 20F10}

\date{\today.}

%\dedicatory{This paper is dedicated to our advisors.}

\keywords{Reflection length, Coxeter groups, word problem, dual approach, braid relations.}

\begin{abstract}
For Coxeter groups with sufficiently large braid relations, we prove that the sequence of powers of a Coxeter element has unbounded reflection length. We establish a connection between the reflection length functions on arbitrary Coxeter groups and the reflection length functions on universal Coxeter groups of the same rank through the solution to the word problem for Coxeter groups. For Coxeter groups corresponding to a Coxeter matrix with the same entry everywhere except the diagonal, upper bounds for the reflection length of the powers of Coxeter elements are established.
\end{abstract}

\maketitle

%\tableofcontents

\section*{Introduction}
Let $(W, S)$ be a Coxeter system. The conjugates of the standard generators in $S$ are called \emph{reflections}. We abbreviate the set of reflections with $R$. The length function corresponding to the set of reflections as a generating set is called \emph{reflection length}. For the reflection length function on direct products of finite and Euclidean reflection groups, formulas are known (see \cite{Carter1972} and \cite{Lewis2018}). In particular, the reflection length is a bounded function on these Coxeter groups. On the other hand, Duszenko shows in \cite{Duszenko2011} that the reflection length is an unbounded function on Coxeter groups that are not splitting into a direct product of finite and Euclidean reflection groups. This type of Coxeter groups is called \emph{\ina} Coxeter groups.  
Duszenko's proof is not constructive, which immediately raises follow-up questions: Which are the elements with high reflection length in \ina Coxeter groups? Is there a formula for the reflection length in \ina Coxeter groups?\par

There are no sequences of elements known with unbounded reflection length in \ina Coxeter groups in general. The only result obtained in this direction is due to Drake and Peters. They prove in \cite{Drake2021} that the reflection length of powers of the Coxeter elements in a universal Coxeter group of rank at least $3$ grows to infinity. Further, a formula for the reflection length for these elements exists (see Lemma~6 in \cite{Drake2021}).\par 
We prove that the sequence of powers of a Coxeter element has unbounded reflection length for every Coxeter group with sufficiently large off-diagonal entries in the Coxeter matrix. The minimal value aside the diagonal needs to be at least $5$ for rank three Coxeter groups and $3$ for rank four or higher Coxeter groups. Direct products of finite and Euclidean reflection groups are excluded indirectly in this way.

\begin{Theorem}\label{Thm: unbounded reflection length for powers of Coxeter elements}
Let $(W,S)$ be a Coxeter system of rank $n$ and let $M=(m_{ij})_{i,j\in I}$ denote its Coxeter matrix. Further, let $w$ be a Coxeter element in $W$. Then, 
\begin{itemize}
\item[(i)] if $n =3$ and $\min\{m_{ij}\mid i\neq j,\; i,j\in I\}\geq 5$, or
\item[(ii)] if $n\geq 4$ and $\min\{m_{ij}\mid i\neq j,\; i,j\in I\}\geq 3$,
\end{itemize}
we have 
\[
\lim_{\lambda\to\infty} l_R(w^\lambda) = \infty.
\]
\end{Theorem}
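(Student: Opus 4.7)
The plan is to deduce the theorem from Drake and Peters' result on universal Coxeter groups by transferring reflection factorizations in $W$ back to the universal Coxeter group $U_n$ of rank $n$, using the solution of the word problem for Coxeter groups. Throughout, let $\pi \colon U_n \twoheadrightarrow W$ denote the canonical projection sending the standard generator $\tilde{s}_i$ of $U_n$ to $s_i$; its kernel is the normal closure of the braid relators $(\tilde{s}_i \tilde{s}_j)^{m_{ij}}$ for $i \neq j$.

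First, I would observe that any reflection factorization $w^\lambda = t_1 \cdots t_k$ in $W$ lifts to a product of $k$ reflections in $U_n$. Indeed, writing each $t_i$ in the form $u_i s_{j_i} u_i^{-1}$ and lifting the elements $u_i$ via chosen reduced words yields reflections $\tilde{t}_i \in U_n$ whose product $\tilde{g}_\lambda := \tilde{t}_1 \cdots \tilde{t}_k$ lies in $\pi^{-1}(w^\lambda)$ and satisfies $l_R(\tilde{g}_\lambda) \leq k$. Hence the coset $\pi^{-1}(w^\lambda) \subset U_n$ always contains a representative whose reflection length in $U_n$ is at most $l_R(w^\lambda)$.

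The core of the argument is then to show that if $l_R(w^\lambda)$ stayed bounded along a subsequence, then so would $l_R(\tilde{w}^\lambda)$ for a fixed lift $\tilde{w} \in U_n$ of the Coxeter element $w$, contradicting Drake and Peters. The difference $\tilde{g}_\lambda \tilde{w}^{-\lambda}$ lies in $\ker\pi$, and by Tits' solution to the word problem it is obtained from the identity by a sequence of braid moves and nil-moves. The numerical hypotheses, $m_{ij} \geq 5$ in rank $3$ and $m_{ij} \geq 3$ in rank $n \geq 4$, are exactly those that make the defining braid relators ``long enough'' that, when applied to a product of reflections, they cannot reduce the reflection length in $U_n$ by an unbounded amount. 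Combining such a quantitative comparison with the growth of $l_R(\tilde{w}^\lambda)$ to infinity established by Drake and Peters then forces $k \to \infty$ as $\lambda \to \infty$.

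The hardest part will be precisely this last quantitative step: converting the word-problem description of $\ker\pi$ into a bound that compares $l_R(w^\lambda)$ in $W$ with $l_R(\tilde{w}^\lambda)$ in $U_n$. The split between the rank-$3$ and higher-rank cases, with different thresholds on $m_{ij}$, suggests that the analysis will have to treat dihedral sub-situations (where short braid relators could otherwise collapse reflection length) separately from the higher-rank setting, where additional non-commuting generators supply the rigidity needed to push the threshold down to $m_{ij} \geq 3$.
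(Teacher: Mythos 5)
Your overall strategy coincides with the paper's: compare $l_R(w^\lambda)$ in $W$ with the reflection length of $(s_1\cdots s_n)^\lambda$ in the universal Coxeter group of rank $n$, mediate the comparison through Tits' solution to the word problem, and invoke the Drake--Peters formula $l_{R_n}(w^\lambda)=\lambda(n-2)$ to get divergence. However, the proposal stops exactly where the proof has to begin. The entire content of the theorem is the quantitative step you defer to the end, and your heuristic description of it is not correct as stated: it is \emph{not} true that the braid relators ``cannot reduce the reflection length in $U_n$ by an unbounded amount.'' Each braid-move can cost up to two reflections in the universal group, and the number of braid-moves one can perform on $(s_1\cdots s_n)^\lambda$ grows linearly in $\lambda$, so the total reduction is unbounded. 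What saves the argument is a \emph{rate} comparison, not a boundedness statement: the paper proves (its Theorem~\ref{Thm: Lower bound}) that $l_{R_n}(\omega_n(\bs))-2m\le l_R(w)$ where $m$ is the number of braid-moves in a reducing sequence for the word obtained from $\bs$ by removing a deletion set, and then bounds $m$ by the maximal number of pairwise non-interfering braid-moves on $(s_1\cdots s_n)^\lambda$. That count is at most $\lambda n/\chi(m_{ij})$, where $\chi(m_{ij})$ is the minimal length of a consecutive window of $(s_1\cdots s_n)^\lambda$ containing an alternating subword $\mathbf{b}_{ij}$ of length $m_{ij}$; the divergence condition becomes $n\bigl(1-2/\chi(m_{ij})\bigr)-2>0$, and plugging in $\chi$ is precisely what produces the thresholds $m_{ij}\ge 5$ for $n=3$ and $m_{ij}\ge 3$ for $n\ge 4$. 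None of this counting, nor the reduction to non-interfering braid-moves (which itself requires a separate lemma), appears in your proposal.

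There is also a structural difficulty in your lifting step. You produce from a reflection factorization $w^\lambda=t_1\cdots t_k$ some element $\tilde g_\lambda\in\pi^{-1}(w^\lambda)$ with $l_{R_n}(\tilde g_\lambda)\le k$, but $\tilde g_\lambda$ need not be $\tilde w^\lambda$, and $\ker\pi$ contains elements of arbitrarily large reflection length, so no uniform estimate over the coset is available. The paper sidesteps this by never leaving the level of words: it fixes the unique $S$-reduced word $(s_1\cdots s_n)^\lambda$ (Speyer's theorem), encodes $l_R(w^\lambda)$ via Dyer's deletion-set criterion, and tracks how a braid-minimalistic reducing sequence for the deleted word acts in $W_n$. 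To complete your argument you would need to supply an analogue of that entire mechanism; as written, the proposal is a correct plan with the decisive estimates missing.
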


This work follows a new approach for the investigation of the reflection length. We compare the reflection length function of an arbitrary Coxeter group with the reflection length function of the universal Coxeter group of the same rank. For this, fix a word $\bs$ over the alphabet $S$ and compare the reflection length $l_R( \omega(\bs))$ of the element $\omega(\bs)$ represented by $\bs$ in the arbitrary Coxeter group and the reflection length $l_{R_n}( \omega_n(\bs))$ of the element $\omega_n(\bs)$ represented by $\bs$ in the universal Coxeter group. With this approach, we obtain the following relation between the different reflection length functions:

\begin{Theorem}\label{Thm: Lower bound}
Let $w$ be an element in a Coxeter system $(W,S)$ of rank $n$ represented by a $S$-reduced word $\bs = u_1\cdots \cdots u_p$. Further, let $\tilde{\bs}$ be a word obtained from $\bs$ by omitting all letters in a deletion set $D(\bs)$.  Let $m$ be the minimal number of braid-moves necessary to transform $\tilde{\bs}$ into the identity. 
The reflection length $l_{R}(w)$ in $W$ is bounded from below:
\[
l_{R_n}(\omega_n(\bs))-2 m \leq l_{R}(w).
\]
\end{Theorem}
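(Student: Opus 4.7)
\emph{Proof plan.} My strategy is to lift a minimal reflection factorization of $w$ in $W$ to a word over $S$, control its image in the universal Coxeter group $U_n$ of rank $n$, and then transfer the information to $\bs$ via Tits's solution of the word problem.

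To begin, set $k = l_R(w)$ and fix a minimal reflection factorization $w = r_1 \cdots r_k$ in $W$. For each $i$, write $r_i = v_i u_{j_i} v_i^{-1}$ with $u_{j_i} \in S$ and $v_i \in W$, pick a reduced expression for $v_i$, and concatenate the resulting $S$-letters to produce a word $\mathbf{t}$ over $S$ representing $w$ in $W$. In $U_n$, the element $\omega_n(\mathbf{t})$ is the product of the $k$ reflections $\omega_n(v_i)\, u_{j_i}\, \omega_n(v_i)^{-1}$, hence $l_{R_n}(\omega_n(\mathbf{t})) \leq k = l_R(w)$.

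Since $\mathbf{t}$ and $\bs$ both represent $w$ in $W$, Tits's theorem yields a finite sequence of nil moves (insertions or deletions of $ss$-pairs) and braid moves transforming one word into the other. Nil moves do not change $\omega_n$, as $s^2 = e$ already holds in $U_n$. The central technical point is that a single braid move changes $l_{R_n}(\omega_n(\cdot))$ by at most $2$: replacing a factor $\underbrace{s_i s_j \cdots}_{m_{ij}}$ by $\underbrace{s_j s_i \cdots}_{m_{ij}}$ multiplies $\omega_n$ by a conjugate of $(s_i s_j)^{m_{ij}}$, and this element has reflection length exactly $2$ in $U_n$---it is reduced of even $S$-length $2 m_{ij} > 0$ and factors as the product of two palindromic (hence reflection) subwords.

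Combining these observations, if the Tits reduction from $\mathbf{t}$ to $\bs$ uses $m'$ braid moves, the triangle inequality gives
\[
l_{R_n}(\omega_n(\bs)) \leq l_{R_n}(\omega_n(\mathbf{t})) + 2 m' \leq l_R(w) + 2 m'.
\]
The remaining step is to match $m'$ with the $m$ from the statement, and this is where I expect the main obstacle. I would argue that the Tits reduction can be organized so that nil deletions pair off the $\mathbf{t}$-letters against certain $\bs$-letters, which then form the deletion set $D(\bs)$; the remaining braid moves act on the residual subword $\tilde{\bs}$ of $\bs$, reducing it to the identity in $m'$ steps. The delicate point is to show that the resulting $m'$ realizes the theorem's minimum---or at least is bounded by it---which requires a careful combinatorial normal-form analysis of the word problem for Coxeter groups.
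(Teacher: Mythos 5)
Your key computation --- that a single braid-move changes $l_{R_n}(\omega_n(\cdot))$ by at most $2$, because it amounts to multiplying by a conjugate of $\omega_n(\mathbf{b}_{ij})^{-1}\omega_n(\mathbf{b}_{ji})$, an element of reflection length exactly $2$ in the universal group --- is precisely the engine of the paper's proof, so that part is sound. The gap is where you yourself anticipate it: your $m'$ counts the braid-moves in a Tits path from your lifted word $\mathbf{t}$ to $\bs$, and nothing in the argument relates this to the $m$ of the statement, which counts braid-moves in a reduction of the specific word $\tilde{\bs}$ (namely $\bs$ with a deletion set removed) to the empty word. These are genuinely different quantities: $\mathbf{t}$ is assembled from an arbitrary minimal reflection factorization whose conjugating elements $v_i$ may be arbitrarily long and bear no relation to $D(\bs)$, so the Tits path from $\mathbf{t}$ to $\bs$ can require far more braid-moves than $m$, and the inequality you obtain, $l_{R_n}(\omega_n(\bs)) \leq l_R(w) + 2m'$, does not imply the theorem. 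The ``combinatorial normal-form analysis'' you defer is the entire difficulty, and I do not see how to carry it out from your starting point.

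The repair is to run your braid-move estimate on the other word. Since $\omega(\tilde{\bs}) = \id$ in $W$, Tits's theorem provides a finite sequence of nil-moves and braid-moves reducing $\tilde{\bs}$ to the empty word; take one realizing the minimum $m$. Nil-moves do not change $\omega_n$, and each braid-move changes $l_{R_n}$ of the represented element by at most $2$ (your computation, which the paper phrases as deleting the leftmost letter of $\mathbf{b}_{ij}$ and appending one letter on the right, each step being multiplication by a reflection of $W_n$). Hence $l_{R_n}(\omega_n(\tilde{\bs})) \leq 2m$. Now restore the $l_R(w)$ deleted letters one at a time: by Dyer's characterization each reinsertion multiplies the element of $W_n$ by a reflection (a palindromic prefix-conjugate), so
\[
l_{R_n}(\omega_n(\bs)) \leq l_{R_n}(\omega_n(\tilde{\bs})) + l_R(w) \leq 2m + l_R(w),
\]
which is the claim. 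This is the paper's route; no lift of a reflection factorization is needed, and the quantity $m$ enters by construction rather than by an after-the-fact matching.
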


A \emph{deletion set} for a word $\bs$ is a minimal set of letters such that an omission of these letters in $\bs$ results in the identity in $W$. A \emph{braid-move} is a substitution of a consecutive subword according to the relations of the form $(s_is_j)^{m({s_i,s_j})}= \id$. The proof of this result uses the solution to the word problem in Coxeter groups by Tits (see \cite{Tits1969}).
Theorem~\ref{Thm: unbounded reflection length for powers of Coxeter elements} is proven by counting all possible braid-moves on powers of Coxeter elements in order to use Theorem~\ref{Thm: Lower bound} and the formula for the reflection length in the universal case.\par 
For approaching the question of a formula for the reflection length in \ina Coxeter groups, we restrict ourselves to the study of Coxeter groups corresponding to a Coxeter matrix with the same entry everywhere except the diagonal. These Coxeter groups are called \emph{\sib}.
From counting subwords of the form $(s_is_j)^{m_{ij}}$ and with Theorem~\ref{Thm: Lower bound}, we derive sharp upper bounds for the reflection length of the powers of Coxeter elements in these Coxeter groups. The proof of the upper bounds is inductive. Finally, we conjecture that for a fixed word $\bs$ and an arbitrary Coxeter group $W$, there always exists a deletion set $D(\bs)$ such that $D(\bs)$ is a subset of a deletion set of $\bs$ in the universal Coxeter group of the same rank. This would imply a complete understanding of the relationship between the reflection length functions of an arbitrary Coxeter group and the universal Coxeter group of the same rank. We prove our conjecture for the special case, where $\bs$ represents a reflection in $W$.   \par \hfill

\paragraph{\bf{Structure}} This article has four sections. The preliminaries contain some foundations about the word problem and reflection length in Coxeter groups. These topics are combined in Section~\ref{Sec: Comparing}, where we establish the two main theorems and some technical lemmata refining the solution to the word problem.  For \sib Coxeter groups, we establish sharp upper bounds for the reflection length of powers of Coxeter elements in the third section. The last section deals with a conjecture about the general relation between the reflection length in an arbitrary Coxeter and the universal Coxeter group of the same rank.

\section{Preliminaries}
We recall the necessary background on Coxeter groups briefly. 
A Coxeter group is given by a finite complete graph without loops and an edge-labelling that is either a natural number greater than $1$ or infinity.
\begin{Def}
Let $\Gamma= (S, E)$ be a finite loop-free graph with finite vertex set $S= \{s_1, \dots, s_n\}$, edge set $E= \{\{u,v\}\subseteq S \mid u\neq v\}$ and an edge-labelling function $m:E\to \mathbb{N}_{\geq 2}\cup \{\infty\}$. The corresponding \textit{Coxeter group} $W$ is given by the presentation
\[
W = \langle S\mid s_i^2= (s_is_j)^{m({s_i,s_j})}= \id \;\forall\, i\neq j\in \{1,\dots, n\}\rangle.
\]
The pair $(W,S)$  is called \textit{Coxeter system}. The graph $\Gamma$ is called \textit{Coxeter graph}. Every Coxeter graph induces a presentation as above and vice versa. We abbreviate $m({s_i,s_j})$ with $m_{ij}$. The cardinality $|S|$ is called the \textit{rank} of $W$. The convention when drawing these graphs is to omit edges with label $2$ and to leave out $3$ as a label. 
If the labelling function is constantly infinity, we call the Coxeter group \textit{universal Coxeter group} and denote it with $W_n$.
Elements that can be written as $s_{\sigma(1)}\cdots s_{\sigma(n)}$ with $\sigma\in \Sym(n)$ are called \textit{Coxeter elements}.
\end{Def}

Prominent examples of Coxeter groups are spherical and Euclidean \emph{reflection groups}. These are discrete groups generated by finitely many hyperplane reflections in the $n$-dimensional sphere $\mathbb{S}^n$ or the $n$-dimensional Euclidean space $\EE^n$. The connected components of their Coxeter graphs are completely classified (see \cite{Coxeter1934a}).

\begin{Def}
We call a Coxeter group $W$ \textit{\sib} and denote it with $W^n_k$ if it is defined by a complete graph over $n$ vertices with a constant labelling function $m(\{u,v\})=k\in \NN_{\geq 2}$. 
\end{Def}

\begin{example}
The single braided Coxeter group $W^3_5$ corresponds to the Coxeter graph:
\[
\begin{tikzpicture}[scale=1, transform shape]
			\draw[fill=black]  (0,0) circle (2pt);
            \draw[fill=black]  (1,0) circle (2pt);
            \draw[fill=black]  (0.5,0.7) circle (2pt);
            \draw (0,0)--(1,0);
            \draw (1,0)--(0.5, 0.7);
            \draw (0,0)--(0.5, 0.7);
            \node at (0.5, -0.20) {$5$};
            \node at (0.06, 0.43) {$5$};
            \node at (0.94, 0.43) {$5$};
\end{tikzpicture}
\]
This is neither a finite nor an Euclidean reflection group according to the classification of these groups.
\end{example}

The basic theory of Coxeter groups is treated in detail in \cite{Humphreys1990}, \cite{Bjorner2005} and \cite{Davis2012}.\par
\hfill\par 

For every generating set of a group, there exists a corresponding length function.

\begin{Def}\label{Def: length function}
For a group $G$ with generating set $Y$, define $\bar{Y}:= Y\cup Y^{-1}$. The according length function $l_Y$ is defined as 
\[
l_Y: G\to\mathbb{N}\, ; \qquad g\mapsto \min \{n\in \NN\mid g\in \bar{Y}^n\}
\]
with $\bar{Y}^n= \{ y_1\cdots y_n\in G\mid y_i\in \bar{Y}\}$. The identity element $\id$ has length zero.
\end{Def}

In a Coxeter system $(W,S)$, every standard generator $s\in S$ is an involution.  We call minimal standard generator factorisations of an element $w\in W$ \textit{$S$-reduced}. Whether a factorisation is reduced, depends on the relations in the group.

\begin{Thm}[cf. Theorem~1 in \cite{Speyer2008}]\label{Thm: Speyer's Theorem}
Let $W$ be an infinite, irreducible Coxeter group and let $(s_1, \dots, s_n)$ be any ordering of generators in $S$. Then the word $(s_1\cdots s_n)^\lambda$
is $S$-reduced for any $\lambda\in \mathbb{N}$.
\end{Thm}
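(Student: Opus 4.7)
The plan is to use the geometric representation of $W$ on the space $V=\bigoplus_{s\in S}\mathbb{R}\alpha_s$ together with the classical reducedness criterion: a word $s_{i_1}\cdots s_{i_k}$ is $S$-reduced if and only if the vectors
\[
\beta_j:=s_{i_1}\cdots s_{i_{j-1}}(\alpha_{i_j}),\qquad j=1,\ldots,k,
\]
are pairwise distinct positive roots. Setting $c=s_1\cdots s_n$ and $\gamma_j:=s_1\cdots s_{j-1}(\alpha_j)$ for $1\le j\le n$, the roots associated with $(s_1\cdots s_n)^\lambda$ are precisely $\bigl\{c^k(\gamma_j):0\le k<\lambda,\ 1\le j\le n\bigr\}$. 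Proving the theorem therefore reduces to showing that for every $\lambda\ge 1$ these $n\lambda$ vectors are pairwise distinct positive roots.

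I would then split the work into two steps. The first is \emph{distinctness}: for each fixed $j$ the $c$-orbit $\{c^k(\gamma_j):k\ge 0\}$ should be infinite, which follows from the fact that in every infinite irreducible Coxeter group the Coxeter element $c$ has infinite order on $V$. This is a structural consequence of the trichotomy of the invariant bilinear form on $V$: in the affine case $c$ has a non-trivial Jordan block at eigenvalue $1$, and in the indefinite case the spectral radius of $c$ on $V$ is strictly greater than $1$; in either setting the $c$-orbit of any nonzero vector in $V$ is infinite, and mutual distinctness across the $n$ values of $j$ follows from a rotation argument combined with the infinite order of $c$.

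The second and more delicate step is \emph{positivity} of every $c^k(\gamma_j)$. Cyclic rotation rewrites $c^k(\gamma_j)=s_1\cdots s_{j-1}\bigl((s_j\cdots s_ns_1\cdots s_{j-1})^k(\alpha_j)\bigr)$, so, symmetrically in $j$, it suffices to show that no power of a Coxeter element of $W$ ever sends a particular simple root to a negative root. In the indefinite case this can be argued from the existence of a dominant eigenvector of $c$ lying in the closure of the fundamental positive cone, whose basin of attraction contains every $\alpha_j$; in the affine case one exploits the linear growth along the generalised eigenspace at eigenvalue $1$ to see that orbits of simple roots eventually lie in the positive cone, and verifies the small-$k$ regime by direct inspection of the affine root systems. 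This positivity step is the main obstacle: distinctness is a clean consequence of the spectral information on $c$, but ruling out that some $c^k$ flips a positive root to a negative one demands genuine geometric control of the Coxeter element action on $V$, particularly in the affine setting where no strict Perron eigenvalue is available.
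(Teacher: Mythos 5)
The paper does not prove this statement at all: it is quoted, with citation, as Theorem~1 of Speyer's paper, so there is no in-house proof to compare against. Judged on its own, your proposal sets up the right criterion (reducedness of $s_{i_1}\cdots s_{i_k}$ follows by induction on prefixes once every $\beta_j=s_{i_1}\cdots s_{i_{j-1}}(\alpha_{i_j})$ is a positive root), but the positivity step --- which you correctly identify as the heart of the matter --- is not closed, and the way you propose to close it does not work.

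Two concrete problems. First, the spectral argument only controls large powers. In the indefinite case, convergence in direction of $c^k(\gamma_j)$ to a dominant eigenvector in the closed positive cone (granting that such an eigenvector lies there and that $\gamma_j$ has nonzero component along it, neither of which you establish) gives $c^k(\gamma_j)>0$ only for all $k$ beyond some threshold depending on $W$; it cannot rule out a sign flip at some intermediate $k$, and ``direct inspection'' is unavailable for the infinitely many unclassified indefinite groups. This is precisely the difficulty Speyer's published proof is built to avoid: he argues uniformly in $k$, showing that a first negative root in the sequence would produce a nonzero vector with nonnegative coordinates pairing nonpositively, under the invariant form, with every simple root; a Vinberg-type lemma then forces a finite or affine component, contradicting infiniteness and irreducibility (the affine case being dispatched by a separate short argument). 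Second, your cyclic-rotation reduction is invalid as stated: from $c^k(\gamma_j)=s_1\cdots s_{j-1}\bigl(c_j^k(\alpha_j)\bigr)$ with $c_j=s_j\cdots s_ns_1\cdots s_{j-1}$, positivity of $c_j^k(\alpha_j)$ does not imply positivity of its image under $s_1\cdots s_{j-1}$, since that element sends exactly $j-1$ positive roots to negative ones; you would additionally need $c_j^k(\alpha_j)$ to avoid that inversion set. A smaller point: the claim that $c$ has infinite orbit on every nonzero vector is false in the affine case ($c$ fixes the radical of the form), but the distinctness step is redundant anyway, since positivity of all the $\beta_j$ already yields reducedness.
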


\subsection{The word problem for Coxeter groups}
From now on, let $(W,S)$ be a Coxeter system of rank $n$.
The following notation is used, where a distinction between a word in the free monoid $(S^*, \cdot)$ and the group element in $W$ it represents is important.
\begin{Not}
We abbreviate a word $\mathbf{s} = u_1\cdots u_p$ in $S^*$ with bold variables. Let $\omega: S^*\to W$ be the canonic surjection. We write $\omega(\mathbf{s})$ for the element in $W$ that is represented by the word $\mathbf{s}$.
\end{Not}

The word problem is the question, of whether there exists an algorithm that decides for every element $\bs\in S^*$ if $\omega(\bs)= \id$ in $W$ (cf. \cite{Dehn1911}). It is well known that the word problem in Coxeter groups is solvable. We use a specific theorem by Tits in \cite{Tits1969} leading to the solution of the word problem. Before stating Tits result, some definitions are necessary. 

\begin{Def}
A \textit{subword} of a word $\bs = u_1\cdots u_p$ over the alphabet $S$ is a subsequence $u_{i_1},\dots, u_{i_q}$ with $1\leq i_1<\cdots <i_q \leq p$ for all $j\in\{1,\dots q\}$. A subword is \emph{consecutive} if the subsequence ${i_1},\dots, {i_q}$ is consecutive.
\end{Def}

\begin{Def}\label{Def: nil- and braid-moves}
We distinguish two types of relations in the definition of a Coxeter group. 
Substituting the consecutive subword $s_is_i$ with the empty word $e$ in a word is called a \textit{nil-move}. For $m_{ij}\in \NN$, let $\mathbf{b}_{ij}$ be the word of length $m_{ij}$ consisting only of alternating letters $s_i$ and $s_j$ starting with $s_i$.
Substituting a consecutive subword $\mathbf{b}_{ij}$ with the subword $\mathbf{b}_{ji}$ in a word in $S^*$ is called a \textit{braid-move}.
\end{Def}

\begin{Rem}\label{Rem: Braid-moves do(n't) change number of letters in word of specific type}
Let $\mathbf{a}$ and $\mathbf{b}$ be two elements in $S^*$ and $s_i, s_j\in S$. From the relations of the type $s_i^2= \id$, the following nil-move can be derived 
\[
\omega(\mathbf{a}\cdot s_is_i\cdot \mathbf{b}) = \omega(\mathbf{a}\cdot\mathbf{b}).
\]
Equivalent to a braid relation $(s_is_j)^{m_{ij}}=\id$  is the braid-move
\[
\omega(\mathbf{a}\cdot \mathbf{b}_{ij}\cdot \mathbf{b}) = \omega(\mathbf{a}\cdot \mathbf{b}_{ji}\cdot\mathbf{b}).
\]
Braid-moves don't change the length of a word. Braid-moves on subwords $\mathbf{b}_{ij}$ of even $S$-length do not change the number of letters of a certain type in a word.  Braid-moves on subwords of odd $S$-length change the number of letters of a certain type by $\pm 1$.
\end{Rem}

\begin{Thm}[cf. Theorem 3 in \cite{Tits1969}]\label{Thm: solution word problem} %, see Theorem 3.3.1 in \cite{Bjorner2005}
Let $(W,S)$ be a Coxeter system and $w$ be an element in $W$. 
\begin{itemize}
\item[(i)] For every word $\bs\in S^*$ with $\omega(\bs)= w$, there exists a finite sequence of nil-moves and braid-moves that transforms $\bs$ into a $S$-reduced expression for $w$.
\item[(ii)] For every pair of $S$-reduced expressions for $w$, there exists a finite sequence of braid-moves that transforms one of the $S$-reduced expressions into the other.
\end{itemize}
\end{Thm}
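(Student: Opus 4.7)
The plan is to establish part (ii) first, by strong induction on $k := l_S(w)$, and then deduce part (i) as a short consequence; throughout I would assume the exchange condition for $(W,S)$ as an external structural input (it is traditionally established independently via the geometric representation).

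For part (ii), given two $S$-reduced expressions $\bs = s_1\cdots s_k$ and $\mathbf{t} = t_1\cdots t_k$ of the same element $w$, I would split according to the first letters. If $s_1 = t_1$, strip the common initial letter and apply the induction hypothesis to $s_2\cdots s_k$ and $t_2\cdots t_k$, both reduced for $s_1 w$ of length $k-1$; prepending $s_1$ to the resulting braid-move sequence connects $\bs$ to $\mathbf{t}$. If $s_1 \neq t_1$, set $s := s_1$, $t := t_1$, $m := m_{st}$, and invoke the following auxiliary dihedral claim: whenever $s,t \in S$ both strictly decrease the length of $w$ on the left, $m_{st}$ is finite and $w$ admits a reduced expression of the form $\mathbf{r} = (\underbrace{sts\cdots}_{m}) \cdot \mathbf{r}'$ whose initial $m$-letter segment is a reduced word for the longest element of the dihedral subgroup $\langle s,t\rangle$. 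Granting the claim, a single braid-move converts $\mathbf{r}$ into $\mathbf{r}^{\ast} := (\underbrace{tst\cdots}_{m}) \cdot \mathbf{r}'$; since $\bs$ and $\mathbf{r}$ both begin with $s$ while $\mathbf{r}^{\ast}$ and $\mathbf{t}$ both begin with $t$, two applications of the same-first-letter case yield braid-equivalences $\bs \sim \mathbf{r} \sim \mathbf{r}^{\ast} \sim \mathbf{t}$.

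The auxiliary dihedral claim is, in my view, the main obstacle. I would prove it by iterating the exchange condition: using that $s$ is a left descent of $w$, pull an $s$ to the front of any reduced expression for $w$; then use that $t$ is a left descent of the resulting suffix to pull a $t$ to the front; alternate. These moves lengthen the alternating initial segment by one at each step, and the process must terminate at length exactly $m_{st}$ because the standard parabolic subgroup $\langle s,t\rangle$ has a unique longest element of that length. Finiteness of $m_{st}$ itself follows because otherwise the infinite dihedral group $\langle s,t\rangle$ would contain arbitrarily long alternating words below $w$ in the strong Bruhat order, contradicting $l_S(w) < \infty$.

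For part (i), I would induct on the length $p$ of $\bs = u_1\cdots u_p$. If $\bs$ is $S$-reduced there is nothing to show; otherwise let $k$ be maximal with $u_1\cdots u_k$ reduced, so that $u_1\cdots u_{k+1}$ fails to be reduced. The exchange condition then produces an index $i \leq k$ making $u_1\cdots u_k$ and $u_1\cdots \hat{u_i}\cdots u_k \, u_{k+1}$ two reduced expressions for the same element of length $k$; by part (ii) they are linked by braid-moves, which I would apply inside the initial segment of $\bs$ to obtain a word containing the consecutive pair $u_{k+1}u_{k+1}$. A single nil-move then removes this pair and yields a word of length $p - 2$ representing $w$, so the inductive hypothesis completes the reduction.
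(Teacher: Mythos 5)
The paper does not prove this statement at all: it is quoted verbatim from Tits (Theorem~3 in the cited reference) and used as a black box, so there is no internal proof to compare yours against. Your proposal is the standard Matsumoto--Tits argument (induction on $l_S(w)$ for the word property, then deduction of part (i) via the exchange condition), and its overall architecture --- same-first-letter reduction, dihedral prefix lemma, one braid-move to switch initial letters, and the prefix-exchange-plus-nil-move induction for part (i) --- is sound. Part (i) in particular is correct as written, granted part (ii) and the exchange condition.

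The one genuine gap is inside your auxiliary dihedral claim, at the assertion that the alternating moves ``lengthen the alternating initial segment by one at each step.'' Suppose you have reached a reduced expression $w = \alpha_j v_{j+1}\cdots v_k$ with $\alpha_j$ alternating of length $j$, and you apply the exchange condition for the generator $r\in\{s,t\}$ that is not the first letter of $\alpha_j$. The exchange condition only tells you that \emph{some} letter $u_i$ can be deleted with $w = r\,(u_1\cdots\hat{u_i}\cdots u_k)$; it does not tell you that $i>j$. If $1\le i<j$ the resulting word contains two adjacent equal letters and cannot be reduced, so those indices are excluded, but the case $i=j$ survives: there you obtain a new reduced expression whose alternating prefix still has length $j$ but begins with the other generator, and the na\"ive iteration can oscillate rather than grow. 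Ruling this out is exactly where the dihedral analysis enters: when $i=j$ the element $\omega(\alpha_j)$ of $\langle s,t\rangle$ admits reduced words beginning with both $s$ and $t$, and one must separately prove that in a dihedral group only the longest element has this property, forcing $j=m_{st}$ and terminating the process (this also yields finiteness of $m_{st}$ more directly than your Bruhat-order remark). This is a standard and entirely fixable omission, but as stated your induction on the prefix length does not close.
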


\begin{Def}\label{Def: (MB)}
Let $(\alpha_i)_{i\leq a}$ be a finite sequence of nil-moves and braid-moves to transform a word $\bs$ into a $S$-reduced word in a Coxeter system $(W,S)$. We say that the sequence $(\alpha_i)_{i\leq a}$ is \emph{braid-minimalistic} if 
\begin{enumerate}
\item $\alpha_i$ is a braid-move if and only if $\alpha_{i-1}\circ\cdots \circ\alpha_1(\bs)$ has no consecutive subword of the form $ss$ for $s\in S$.
\item If $\alpha_i$ is a braid-move on a consecutive subword $\mathbf{b}_{ij}$, there is no other braid-move $\alpha_j$ in $(\alpha_i)_{i\leq a}$ on the subword in the same position as $\mathbf{b}_{ij}$ in $\bs$ involving the letters $s_i, s_j$.
\end{enumerate} This means especially that braid-moves just appear if there are no further nil-moves possible.
\end{Def}

\begin{example}
Consider the word $\bs:= s_1s_2s_1s_2s_2s_3s_2s_3$ and the \sib Coxeter group $W^3_3$ of type $\tilde{A}_2$. The first of the following two different ways of reducing $\bs$ in $W^3_3$ is braid-minimalistic. 
\begin{equation}
s_1s_2s_1s_2s_2s_3s_2s_3\mapsto s_1s_2s_1s_3s_2s_3\mapsto s_2s_1s_2s_3s_2s_3\mapsto s_2s_1s_3s_2
\end{equation}
\begin{equation}
s_1s_2s_1s_2s_2s_3s_2s_3\mapsto s_1s_1s_2s_1s_3s_2s_3s_3\mapsto s_2s_1s_3s_2
\end{equation} 
The second one is not braid-minimalistic since the nil-move $s_2s_2\mapsto e$ is not the first move.
\end{example}

\begin{Lem}\label{Lem: There always exists a sequence of nil and braid moves that is MB}
Let $\bs\in S^*$ be a word representing an element $\omega(\bs)$ in a Coxeter system $(W, S)$. There exists a finite, braid minimalistic sequence $(\alpha)_{i\leq a}$ of nil-moves and braid-moves transforming $\bs$ to a $S$-reduced expression for $\omega(\bs)$. 
\end{Lem}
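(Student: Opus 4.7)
I would prove the lemma by strong induction on the length $|\bs|$ of the word.

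If $\bs$ is already $S$-reduced (in particular if $|\bs|=0$), the empty sequence vacuously satisfies both conditions of Definition~\ref{Def: (MB)}. Otherwise there are two cases to distinguish.

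If $\bs$ contains a consecutive subword of the form $ss$ with $s\in S$, I apply a nil-move at that occurrence to produce a shorter word $\bs'$ with $|\bs'|=|\bs|-2$, invoke the inductive hypothesis on $\bs'$ to obtain a braid-minimalistic sequence reducing $\bs'$ to an $S$-reduced word, and prepend the nil-move. Condition~(1) at the prepended nil-move is immediate because an $ss$ subword was present in $\bs$, while condition~(2) only constrains braid-moves and is thus inherited from the recursive sequence.

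If $\bs$ contains no $ss$ subword but is not $S$-reduced, Theorem~\ref{Thm: solution word problem}(i) furnishes some finite sequence $\sigma=(\beta_1,\dots,\beta_b)$ of moves that reduces $\bs$; among all such sequences I choose $\sigma$ of minimum total length. Since $\bs$ lacks any $ss$ subword the first entry $\beta_1$ must be a braid-move, and since $\bs$ is not reduced $\sigma$ contains at least one nil-move. Let $\nu=\beta_k$ denote the first nil-move in $\sigma$, so that $\beta_1,\dots,\beta_{k-1}$ are all braid-moves and the word $\beta_{k-1}\circ\cdots\circ\beta_1(\bs)$ carries the $ss$ subword on which $\nu$ is applied. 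I then let $j\leq k-1$ be the smallest index at which an $ss$ subword first appears in $\beta_j\circ\cdots\circ\beta_1(\bs)$, apply $\beta_1,\dots,\beta_j$ in succession, and follow with a nil-move on the freshly created $ss$ subword. The resulting word has length $|\bs|-2$, and the inductive hypothesis provides a braid-minimalistic sequence reducing it to an $S$-reduced word; concatenating this recursively obtained tail with $(\beta_1,\dots,\beta_j,\text{nil-move})$ produces the candidate sequence for $\bs$.

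The main obstacle is verifying condition~(2) of braid-minimalism for the resulting sequence. Within the prefix $\beta_1,\dots,\beta_j$, two coinciding braid-moves at the same position with the same letter pair would allow their cancellation, producing a strictly shorter reduction sequence of $\bs$ and contradicting the minimality of $\sigma$. More delicate is ruling out that some braid-move in the recursively obtained tail coincides with one of the $\beta_i$; I would address this by observing that the intervening nil-move deletes two letters and therefore shifts positions, so that the positions at which $\beta_1,\dots,\beta_j$ acted are not present as such in the shortened word. This position-bookkeeping across the nil-move is the crux of the argument and is where I expect to spend most of the technical effort.
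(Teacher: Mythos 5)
Your proposal follows essentially the same strategy as the paper's proof: apply nil-moves with priority and, whenever none is available, take the next braid-move(s) from a Tits reduction sequence until a new $ss$ appears, repeating until the word is $S$-reduced --- you merely package this as an induction on word length, which actually makes termination cleaner than in the paper's iterative description. The condition-(2) bookkeeping that you flag as the remaining technical crux is precisely the point the paper itself dispatches in a single sentence (``we keep track of executed braid-moves and don't execute redundant braid-moves''), so your argument is correct in outline and no less complete than the published one.
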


\begin{proof}
From Theorem~\ref{Thm: solution word problem}, we know that a sequence of nil-moves and braid-moves transforms $\bs$ to a reduced expression.
The braid-minimalistic sequence $(\alpha)_{i\leq a}$ is obtained as follows: Execute nil-moves on $\bs$ and the resulting words until no more nil-moves are possible. Either the obtained word is $S$-reduced or there is a braid-move possible to obtain a word $\bs'$. Again, we know from the solution to the word problem that $\bs'$ is transformable to a $S$-reduced expression by nil-moves and braid-moves. So we repeat this procedure while keeping track of the executed braid-moves until we obtain a reduced word. This ensures that property (1) from Definition~\ref{Def: (MB)} holds for the obtained sequence. Property (2) in the definition holds since we keep track of executed braid-moves and don't execute redundant braid-moves.
\end{proof}

\subsection{Reflection length}
Now, we consider a Coxeter group $W$ with the conjugates of all standard generators as a generating set.
This is often called the \emph{dual approach}. This generating set is natural in the sense that the generators are exactly the elements that act as reflections on geometric realisations like the Davis complex or the Coxeter complex of the Coxeter group.

\begin{Def}
The conjugates of the standard generators in $S$ are called \textit{reflections}. A reflection is an involution and the set of reflections \[R:=\{wsw^{-1}\;|\; w\in W, \; s\in S\}\] is a generating set for the Coxeter group $W$.\par
According to Definition \ref{Def: length function}, there exists a corresponding length function $l_R$. We call minimal reflection factorisations of an element $w\in W$ \textit{$R$-reduced}.  Whether a factorisation is $R$-reduced, depends on the relations in the group.  The reflection length function of the universal Coxeter group $W_n$ is abbreviated with $l_{R_n}$.
\end{Def}

We recall some of the basic properties of reflection length.

\begin{Lem}\label{Lem: basic properties} 
Let $(W,S)$ be a Coxeter system  and $R$ be its set of reflections. We have
\begin{enumerate}
\item[(i)] $l_R(w)\leq l_S(w)$ for all $w\in W$.
\item[(ii)] The reflection length function $l_R$ is constant on conjugacy classes.
\item[(iii)] $l_R(g)-l_R(h)\leq l_R(gh)\leq l_R(g)+l_R(h)$ for all $g,h\in W$.
\item[(iv)]  $l_R(wr) = l_R(w) \pm 1$ for all $r\in R$ and all $w\in W$. 
\item[(v)] $l_R(w) \equiv l_S(w) \mod 2$ for all $w\in W$. 
\item[(vi)] If $W$ is a \sib Coxeter group of rank $n$, every element $\sigma$ in the symmetric group $S(n)$ defines a reflection length preserving automorphism of $W$ by permuting generators. 
\end{enumerate}
\end{Lem}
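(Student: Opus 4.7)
The plan is to handle each of the six items by elementary means, with the parity relation (v) serving as the linchpin for (iv). Items (i)--(iii) follow from the inclusion $S \subseteq R$ and routine manipulation of factorizations: for (i), any $S$-reduced word for $w$ is automatically an $R$-factorization. For (ii), if $w = r_1 \cdots r_k$ is an $R$-reduced factorization, then $gwg^{-1} = (gr_1g^{-1})\cdots(gr_kg^{-1})$ is a factorization of $gwg^{-1}$ in reflections of the same length, so $l_R(gwg^{-1}) \leq l_R(w)$; symmetry yields equality. For (iii), concatenating $R$-reduced words for $g$ and $h$ produces an $R$-factorization of $gh$, and the lower bound $l_R(g)-l_R(h) \leq l_R(gh)$ follows by substituting $(gh, h^{-1})$ into the upper bound and using $l_R(h^{-1}) = l_R(h)$, which holds because reflections are involutions.

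For (v), I would introduce the sign homomorphism $\operatorname{sgn}\colon W \to \{\pm 1\}$ defined on generators by $s_i \mapsto -1$; this is well-defined since every defining relation $(s_is_j)^{m_{ij}} = \id$ maps to $1$. Because each reflection is a conjugate of a standard generator, every $r \in R$ also satisfies $\operatorname{sgn}(r) = -1$. Computing $\operatorname{sgn}(w)$ from an $S$-reduced factorization and from an $R$-reduced factorization gives $(-1)^{l_S(w)} = \operatorname{sgn}(w) = (-1)^{l_R(w)}$, which is the claimed parity agreement. The main obstacle is (iv), since the triangle inequality from (iii) only yields $|l_R(wr)-l_R(w)| \leq 1$ and still permits equality. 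To exclude that case, observe that by (v) the parities of $l_R(wr)$ and $l_R(w)$ match those of $l_S(wr)$ and $l_S(w)$, which differ since $r$ has odd $l_S$-length as a conjugate of a single generator. Hence $l_R(wr)$ and $l_R(w)$ have opposite parity, forcing the difference to be exactly $\pm 1$.

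For (vi), assume $W = W_k^n$ so that the Coxeter matrix is invariant under every permutation of the labels $\{1,\ldots,n\}$. Consequently, for any $\sigma \in \Sym(n)$, the assignment $s_i \mapsto s_{\sigma(i)}$ preserves all defining relations and therefore extends to a group automorphism $\varphi_\sigma$ of $W$. Since $\varphi_\sigma$ permutes $S$, it carries the conjugacy closure of $S$ to itself; that is, $\varphi_\sigma(R) = R$. Consequently $\varphi_\sigma$ preserves any length function built from $R$, giving $l_R(\varphi_\sigma(w)) = l_R(w)$ for every $w \in W$.
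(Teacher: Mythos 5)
Your proof is correct and complete; the paper itself states these properties without proof, recalling them as standard facts, so there is no in-text argument to compare against. Your route — the inclusion $S\subseteq R$ for (i), conjugation of factorizations for (ii) and (vi), the triangle inequality for (iii), the sign homomorphism for (v), and the combination of (iii) and (v) to upgrade $|l_R(wr)-l_R(w)|\leq 1$ to exactly $\pm 1$ in (iv) — is the standard one and all steps are sound.
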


\begin{Rem}
The reflection length is additive on direct products. For a Coxeter group $W= W_1\times W_2$ that is the direct product of Coxeter groups $W_1$ and $W_2$, an element $(w_1,w_2)\in W$ has reflection length 
\[
l_R(w_1w_2) = l_{R_1}(w_1)+l_{R_2}(w_2)
\]
where $l_{R_1}$ and $l_{R_2}$ are the reflection length functions on $W_1$ and $W_2$.
\end{Rem}

\begin{example}
Consider the Coxeter groups $W_1$ and $W_2$ over three generators $a,b,c$ defined by the following two graphs from left to right:
\[
\begin{tikzpicture}[scale=1, transform shape]
			\draw[fill=black]  (0,0) circle (2pt);
            \draw[fill=black]  (1,0) circle (2pt);
            \draw[fill=black]  (0.5,0.7) circle (2pt);
            \draw (0,0)--(1,0);
            \draw (1,0)--(0.5, 0.7);
            \draw (0,0)--(0.5, 0.7);
            \node at (0.5, -0.20) {$\infty$};
            \node at (0.03, 0.40) {$\infty$};
            \node at (0.97, 0.40) {$\infty$};
        \end{tikzpicture}
\qquad\qquad
\begin{tikzpicture}[scale=1, transform shape]
            \draw[fill=black] (0, 0) circle (2pt);
            \draw[fill=black] (1, 0) circle (2pt);
            \draw[fill=black] (2, 0) circle (2pt);
            \draw (0,0)--(2, 0);
            \node at (1.5, 0.2) {$\infty$};
            \node at (0, -0.4) {$a$};
            \node at (1, -0.4) {$b$};
            \node at (2, -0.4) {$c$};
            \node at (0.5, 0.2) {$\infty$};
\end{tikzpicture}
\]
The element represented by the word $\mathbf{w} = abcabc$ in $W_1$ has reflection length $4$ ( to be seen with Lemma~\ref{Lem: l_R in univ. group, powers of Cox element}). A $R$-reduced factorisation is $abcabc = aba\cdot aca\cdot b\cdot c\cdot$. In contrast, the element represented by $\mathbf{w}$ in $W_2$ has reflection length $2$. This is the minimal possible reflection length because of parity reasons (cf. Lemma~\ref{Lem: basic properties}). A $R$-reduced factorisation is $abcabc = aba\cdot cbc$.
\end{example}

\begin{Def}\label{Def: ina}
A Coxeter system $(W, S)$ is called \emph{\ina} if not all connected components of the corresponding Coxeter graph are of Euclidean or spherical type. This is equivalent to $W$ not splitting into a direct product of spherical and Euclidean reflection groups. 
\end{Def}

Since the reflection length is additive on direct products and formulas are known in Euclidean and spherical reflection groups, the reflection length function of all Coxeter groups except the \ina ones is well understood. On the other hand, the reflection length is an unbounded function on \ina Coxeter groups. 

\begin{Thm}[Theorem 1.1. in \cite{Duszenko2011}]\label{Thm: Duszenko LR unbpunded}
Let $(W,S)$ be an \ina Coxeter system. The reflection length $l_R$ is an unbounded function on $W$.
\end{Thm}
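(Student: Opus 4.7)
The plan is to work geometrically on the Davis--Moussong complex $\Sigma$ of $(W,S)$, which carries a proper $W$-action by cellular isometries on a complete CAT(0) space and on which every reflection $r\in R$ fixes a convex wall $H_r$. The underlying principle is that any factorisation $w = r_1\cdots r_k$ can only contribute a bounded amount of ``hyperbolic translation'' per factor, so if one exhibits a sequence of elements whose CAT(0) translation length on $\Sigma$ grows without bound, their reflection length must also grow without bound.

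After reducing to the irreducible case via the additivity of $l_R$ on direct factors (an \ina system always has at least one irreducible factor which is neither finite nor affine), I would focus on a Coxeter element $c$ and its powers. Krammer's spectral analysis of the Coxeter element on the Tits cone shows that in the irreducible non-affine case $c$ has eigenvalues off the unit circle, hence acts as a semisimple hyperbolic isometry of $\Sigma$ with strictly positive translation length; consequently $\tau(c^\lambda) = \lambda \tau(c) \to \infty$. The decisive step is to convert this into a lower bound on $l_R(c^\lambda)$: picking a base point $x_0$ on the axis of $c$ and using CAT(0) projection to the axis, one shows that each reflection factor $r_i$ can only advance the projection along the axis by a uniformly bounded amount $C$, giving
\[
\lambda\tau(c) \;=\; \tau(c^\lambda) \;\leq\; C\cdot l_R(c^\lambda),
\]
which forces $l_R(c^\lambda) \to \infty$.

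The main obstacle will be making the last estimate rigorous and uniform across all irreducible \ina Coxeter groups. The naive telescoping $d(x_0,c^\lambda x_0) \leq \sum_i d(x_0, r_i x_0) = 2\sum_i d(x_0, H_{r_i})$ is useless, since the walls $H_{r_i}$ can be arbitrarily far from $x_0$; one really needs a contracting-projection argument on the axis of $c$. For irreducible Coxeter groups of Lorentzian (``hyperbolic'') type this is essentially the Morse lemma in a CAT($-1$) setting and poses no trouble, but for higher-rank non-affine groups one must either produce a rank-one factor inside the action (for instance by finding a rank-$3$ standard parabolic subgroup of hyperbolic type in any \ina Coxeter group, via classification of rank-$3$ spherical/affine parabolics) or invoke the CAT($0$) flat-torus theorem to exclude that the axis of $c$ lies inside a flat preventing such contraction. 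This geometric/combinatorial input is, I expect, the part of the argument that most needs care.
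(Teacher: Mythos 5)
First, be aware that the paper does not prove this statement at all: it is quoted from Duszenko, and the introduction explicitly records that Duszenko's proof is \emph{not} constructive --- no explicit sequence of elements with growing reflection length is produced there. Your proposal is therefore not a variant of ``the paper's proof'' but an independent, constructive attempt via powers of a Coxeter element. In spirit it is much closer to Theorem~\ref{Thm: unbounded reflection length for powers of Coxeter elements} of the present paper, which carries out exactly this programme but only succeeds under a hypothesis on the braid relations ($m_{ij}\ge 5$ in rank $3$, $m_{ij}\ge 3$ in rank $\ge 4$), and by combinatorial means (Tits' solution of the word problem and Dyer's deletion criterion) rather than CAT(0) geometry. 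That the general case resists such arguments is precisely why the full Duszenko theorem is still imported as a black box here.

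The decisive step of your argument --- the inequality $\tau(c^\lambda)\le C\cdot l_R(c^\lambda)$ obtained by bounding how far each reflection factor can advance the projection to the axis --- is false, and not merely in need of a more careful contracting-projection argument. A single reflection $r\in R$ whose wall crosses the axis of $c$ roughly orthogonally at distance $D$ from $x_0$ moves $x_0$, and also its projection to the axis, by roughly $2D$, and $D$ is unbounded over $r\in R$; consequently a product of \emph{two} reflections of $W$ whose walls cross a common geodesic at far-apart points is already an isometry of arbitrarily large translation length with reflection length $2$. Hence no inequality of the form $\tau(w)\le C\, l_R(w)$ can hold, and an argument using only ``$c^\lambda$ translates far along a (contracting) axis'' proves too much, since it nowhere distinguishes $c^\lambda$ from such short products. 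The affine case is the litmus test: for $\tilde A_2$ every geometric ingredient you invoke is present (proper cocompact CAT(0) action, walls, hyperbolic isometries $c^\lambda$ of unbounded translation length), yet $l_R\le 3$ there. Whatever genuinely non-affine input makes the theorem true, it is not captured by translation length and axis projections; any repair must constrain which walls can occur in a short factorisation (via root-system estimates, Dyer's criterion, or the braid-move counting of this paper), and that is where the actual difficulty lies.
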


Apart from this, little is known about the reflection length function of \ina Coxeter groups.
The following result by Dyer is essential for computing reflection length in arbitrary Coxeter groups. It is the only known method to compute reflection length in \ina Coxeter groups in general. 

\pagebreak

\begin{Thm}[cf. Theorem 1.1. in \cite{Dyer2001}]\label{Thm: Dyer's Theorem}
Let $\mathbf{s} = u_1\cdots u_p$ be a $S$-reduced expression in a Coxeter group $W$. Then $l_R(\omega(\mathbf{s}))$ is
the minimum of the natural numbers $q$ for which there exist $1\leq i_1 < \cdots < i_q \leq p$ such that $\id = \omega(u_1 \cdots \hat{u}_{i_1}\cdots \hat{u}_{i_q}\cdots u_p)$, where a hat over a letter indicates its omission.
\end{Thm}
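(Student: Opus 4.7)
The statement splits into the two inequalities $l_R(w) \le q_{\min}$ and $l_R(w) \ge q_{\min}$, where $w := \omega(\bs)$ and $q_{\min}$ denotes the claimed minimum deletion count. I would prove them separately.

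\textbf{Upper bound.} Given any valid deletion set $1 \le i_1 < \cdots < i_q \le p$, decompose $\bs = A_1 u_{i_1} A_2 u_{i_2} \cdots A_q u_{i_q} A_{q+1}$ into the marked letters $u_{i_j}$ and the (possibly empty) gap subwords $A_j$. Set $B_j := \omega(A_1 A_2 \cdots A_j)$ and define the reflections $r_j := B_j u_{i_j} B_j^{-1}$. A straightforward induction on $k$ yields the identity $A_1 u_{i_1} \cdots A_k u_{i_k} = r_1 \cdots r_k B_k$ in $W$, the inductive step using $B_{k-1} A_k u_{i_k} = B_k u_{i_k} = r_k B_k$. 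Taking $k = q$ and appending $A_{q+1}$ gives $w = r_1 \cdots r_q B_{q+1}$. By hypothesis $B_{q+1} = \omega(u_1 \cdots \hat{u}_{i_1} \cdots \hat{u}_{i_q} \cdots u_p) = \id$, so $w = r_1 r_2 \cdots r_q$ and $l_R(w) \le q$. Optimising over deletion sets gives $l_R(w) \le q_{\min}$.

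\textbf{Lower bound and main obstacle.} I would induct on $p = l_S(w)$; the case $p = 0$ is trivial. For $p \ge 1$ and $q := l_R(w) \ge 1$, the plan is: produce a reflection $r \in R$ satisfying \emph{both} $l_S(wr) < l_S(w)$ \emph{and} $l_R(wr) = q - 1$. Granted such an $r$, the Strong Exchange Condition furnishes an index $i$ with $wr = \omega(u_1 \cdots \hat{u}_i \cdots u_p)$ reduced of $S$-length $p-1$; by the inductive hypothesis this shorter reduced word admits $q-1$ deletion positions whose removal gives $\id$. Re-indexing those positions into $\bs$ and adjoining $i$ produces $q$ deletion positions for $\bs$, yielding $q_{\min} \le q$. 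The principal obstacle is the key sub-lemma that such an $r$ always exists, i.e.\ that
\[
\{r \in R : l_S(wr) < l_S(w)\} \;\cap\; \{r \in R : l_R(wr) < l_R(w)\} \;\neq\; \varnothing
\]
whenever $l_R(w) \ge 1$. Restricting the search to $S$ does \emph{not} suffice: in $S_3$ the element $w = s_1 s_2 s_1$ has $l_R(w) = 1$ but $l_R(ws_1) = l_R(ws_2) = 2$, and only the non-simple reflection $r = s_1 s_2 s_1 = w$ witnesses the lemma. A natural strategy would begin with any $R$-reduced factorization $w = r_1 \cdots r_q$ (whose last entry $r_q$ already decreases $l_R$) and apply the Hurwitz/braid action on reflection factorizations to ``rotate'' the tuple until its last entry additionally lies in the Bruhat-descent set of $w$; making this rotation rigorous in full generality is the technical heart of the argument and is most naturally carried out via Dyer's reflection-subgroup machinery from \cite{Dyer2001}.
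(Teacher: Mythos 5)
The paper does not actually prove this statement: it is quoted verbatim from Dyer (Theorem~1.1 of \cite{Dyer2001}), so there is no internal proof to compare yours against, and I judge the proposal on its own merits. Your upper bound is correct and complete: the telescoping identity $A_1u_{i_1}\cdots A_ku_{i_k}=r_1\cdots r_kB_k$ with $r_j=B_ju_{i_j}B_j^{-1}$ is the standard argument (it is also the computation underlying Lemma~\ref{Lem: After Dyer} in this paper), and it does not even require $\bs$ to be reduced.

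The lower bound, however, has two genuine gaps. The first you flag yourself: the sub-lemma that some reflection simultaneously decreases $l_S$ and $l_R$ is the entire content of the theorem, and ``rotate the factorization by Hurwitz moves until its last entry is a Bruhat descent'' is not an argument --- there is no a priori reason the Hurwitz orbit of a reduced reflection factorization meets the descent set $\{t\in R: l_S(wt)<l_S(w)\}$ in its last coordinate, and invoking ``Dyer's reflection-subgroup machinery from \cite{Dyer2001}'' to prove Dyer's theorem is circular as written. The second gap is structural and would bite even if the sub-lemma were granted: Strong Exchange gives $wr=\omega(u_1\cdots\hat u_i\cdots u_p)$, but this word need not be reduced, since $l_S(wr)$ can be any value below $p$ of the correct parity, so your inductive hypothesis (stated only for reduced expressions) cannot be applied to it. Concretely, in the infinite dihedral group take $\bs=abab$ and $r=bab$: then $wr=a$, so $l_S(wr)=1<4$ and $l_R(wr)=1=l_R(w)-1$, i.e.\ $r$ satisfies your sub-lemma, yet the deleted word is $abb$, which is not reduced. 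Nor can you always insist on $l_S(wr)=p-1$ in addition: if $w$ is a non-simple reflection, the only $r$ with $l_R(wr)=0$ is $r=w$ itself, and then $l_S(wr)=0\neq p-1$. A standard repair is to prove the statement for arbitrary (not necessarily reduced) words $\bs$ --- your upper bound already holds in that generality --- and induct on the word length of $\bs$; alternatively one follows Dyer's actual route through the reflection cocycle and canonical generators of reflection subgroups.
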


\begin{example}\label{Example: Counter example to lower bound theorem}
Consider the Coxeter group $W$ of type $\tilde{A}_2$ corresponding to the graph 
\[
\begin{tikzpicture}
\small
 \tikzset{enclosed/.style={draw, circle, inner sep=0pt, minimum size=.1cm, fill=black}}

      \node[enclosed, label={below,yshift=-0.0cm: $s_1$}] (A) at (0,0) {};
      \node[enclosed, label={below: $s_2$}] (B) at (1,0) {};
      \node[enclosed, label={above, yshift=0cm: $s_3$}] (C) at (0.5,0.7) {};

      \draw (A) -- (B) node[midway, below] (edge1) {$3$};
      \draw (C) -- (B) node[midway, right, ,yshift=+0.1cm] (edge2) {$3$};
      \draw (C) -- (A) node[midway, left, ,yshift=+0.1cm] (edge3) {$3$};
\end{tikzpicture}.
\]
Let $\mathbf{w}:=s_1s_2s_3\in S^*$. The reflection length of $\omega({\mathbf{w}^4s_1s_2})$ is $2$. This is to be seen by omitting the following letters:
\begin{align*}
& s_1s_2s_3s_1\hat{s}_2s_3s_1s_2s_3\hat{s}_1s_2s_3s_1s_2 = s_1s_2\cdot s_3s_1s_3s_1\cdot s_2s_3s_2s_3\cdot s_1s_2\\ 
=\; & s_1s_2\cdot s_1s_3\cdot s_3s_2\cdot s_1s_2 = \id.
\end{align*}
Hence, by Theorem \ref{Thm: Dyer's Theorem} the reflection length is at most $2$. Since the word length is even, this implies $l_R( w^4s_1s_2)=2$.
\end{example}

\begin{Def}
In a Coxeter system $(W, S)$, let $\mathbf{w}= u_1\cdots u_p\in S^*$ be a word representing an element $w= \omega(\mathbf{w})\in W$. We call a minimal set of indices like in Theorem~\ref{Thm: Dyer's Theorem} \emph{deletion set} for $\mathbf{w}$ and abbreviate such a set with $D(\mathbf{w})$.
Thus, we have for the cardinality $|D(\mathbf{w})|= l_R(w)$.
\end{Def}

\begin{example}
Consider the single braided Coxeter group  $W^3_4$  (with $m_{ij}=4$ and rank $3$). Define $\mathbf{w}:=s_1s_2s_3 \in S^*$. The reflection length of $\omega(\mathbf{w}^5s_1s_2)$ is $5$ and there are multiple deletion sets. 
\begin{enumerate}
\item $s_1s_2\hat{s}_3s_1s_2\hat{s}_3s_1s_2s_3s_1\hat{s}_2s_3s_1\hat{s}_2s_3s_1s_2 = s_2s_1\cdot s_1s_3s_2$ has reflection length $1$.
\item $s_1\hat{s}_2{s}_3s_1s_2\hat{s}_3s_1s_2\hat{s}_3s_1s_2\hat{s}_3s_1{s}_2s_3s_1\hat{s}_2 = \id$.
\end{enumerate}
\end{example}

\begin{Lem}\label{Lem: After Dyer}
Let $w$ be an element of a Coxeter system $(W,S)$ represented by a word $\mathbf{s}=u_1\cdots u_p\in S^*$ as above and let $D(\bs)=\{i_1, \dots, i_q\}$ be a deletion set. For every proper subset $N=\subsetneq D(\bs)$, let $w^{\setminus N}$ be the element represented by the word that we obtain from $\bs$ by removing all letters with indices in $N$. With $w':=w^{\setminus N}$ we have
\[
l_R(w')= q -|N|\quad \text{ and }\quad l_R(w^{\setminus N\cup\{ i_j\}})= l_R(w')-1 \quad \forall\; i_j\in \{i_1, \dots, i_q \}\setminus N.
\]
\end{Lem}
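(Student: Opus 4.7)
The plan is to establish part (i) directly, and then obtain part (ii) as a specialization. The backbone is a standard computation: if $\bs = u_1 \cdots u_p$ represents $w$ and we delete the letter $u_i$, the resulting word represents $w \cdot r$, where $r = (u_{i+1} u_{i+2} \cdots u_p)^{-1}\, u_i\, (u_{i+1} u_{i+2} \cdots u_p) \in R$ is a reflection. Iterating this over the indices of a set $N = \{i_{j_1} < \cdots < i_{j_k}\} \subseteq D(\bs)$ from left to right gives $w' = \omega(\bs^{\setminus N}) = w \cdot r_k \cdots r_1$ for suitable reflections $r_1, \dots, r_k$. A key point to note is that this computation never uses that $\bs$ is $S$-reduced.

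For the lower bound in part (i), the identity $w = w' \cdot r_1 \cdots r_k$ (each $r_i$ is an involution) together with property (iii) of Lemma~\ref{Lem: basic properties} yields $q = l_R(w) \leq l_R(w') + k$, i.e.\ $l_R(w') \geq q - |N|$. For the matching upper bound, I would apply the same observation to $\bs^{\setminus N}$: removing the remaining $q - |N|$ letters with indices in $D(\bs) \setminus N$ from $\bs^{\setminus N}$ produces the empty word, hence $w'$ is expressible as a product of $q - |N|$ reflections, giving $l_R(w') \leq q - |N|$. Combining both estimates yields $l_R(w') = q - |N|$.

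Part (ii) is then immediate. Set $N' := N \cup \{i_j\}$ for a chosen $i_j \in D(\bs) \setminus N$. If $N' \subsetneq D(\bs)$, part (i) applied to $N'$ gives $l_R(w^{\setminus N'}) = q - |N'| = l_R(w') - 1$. Otherwise $N' = D(\bs)$, so $w^{\setminus N'} = \id$ has reflection length $0$, while part (i) applied to $N$ gives $l_R(w') = q - (q-1) = 1$, and the equality $l_R(w^{\setminus N'}) = l_R(w') - 1$ still holds.

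The only conceptual point worth flagging is that the upper bound in part~(i) is precisely the forward direction of Theorem~\ref{Thm: Dyer's Theorem} — that a $q$-element deletion set produces an $R$-factorisation of length $q$ — and that this direction does not require the word being deleted to be $S$-reduced. Once this is granted, the rest of the argument is bookkeeping, so I do not anticipate a genuine obstacle in carrying out the proof.
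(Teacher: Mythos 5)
Your proof is correct and follows essentially the same route as the paper's: both rest on the correspondence between deleting a letter of $\bs$ and multiplying $w$ by a reflection (the conjugate of that letter by a prefix or suffix), combined with Lemma~\ref{Lem: basic properties} for the lower bound and the remaining $q-|N|$ deletion indices for the matching upper bound. One cosmetic slip: deleting the remaining letters of $D(\bs)$ from $\bs^{\setminus N}$ yields a word representing the identity, not the empty word, but this does not affect the argument.
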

\begin{proof}
Define the reflection $r_i:= u_1\cdots u_{i-1}u_iu_{i-1}\cdots u_1$. A deletion set $D(\bs)$ of indices $1\leq i_1 < \cdots < i_q \leq n$ corresponds to the following minimal reflection factorisation of $w$: 
\[
w= r_{i_q}\cdots r_{i_1}.
\]
A proper subset $N\subsetneq D(\bs)$ is a totally ordered set $i_{n_1}<\dots <i_{n_m}$ with $m< q$. To remove all the letters corresponding to the indices $i_{n_j}\in N$ from the word $\bs$ we multiply from the left with $r_{i_{n_1}}\cdots r_{i_{n_m}}$. This results in 
\[
\omega(u_1\cdots \hat{u}_{i_{n_j}}\cdots u_p)= r_{i_{n_1}}\cdots r_{i_{n_m}}\cdot w=r_{i_{n_1}}\cdots r_{i_{n_m}}\cdot r_{i_q}\cdots r_{i_1}.
\]
For every $r_{i_{n_j}}$ exits an $r_{i_k}$ such that $r_{i_{n_j}}=r_{i_k}$, because $N\subsetneq D(\bs)$. Thus, we have 
\[
w^{\setminus N}= r_{i_{n_1}}\cdots r_{i_{n_m}}\cdot r_{i_q}\cdots r_{i_1}= r_{i_{n_1}}\cdots r_{i_{n_m}}\cdot r_{i_q}\cdots r_{i_{n_m}}\cdots r_{i_{n_1}}  \cdots r_{i_1}. 
\]
 In the reflection factorisation $r_{i_{n_1}}\cdots r_{i_{n_n}}\cdot r_{i_p}\cdots r_{i_{n_n}}\cdots r_{i_{n_1}}  \cdots r_{i_1}$ all reflections between two equal reflections $r_{i_{n_j}}$ can be written as reflections conjugated with $r_{i_{n_j}}$. The reflection length only changes by $\pm 1$ when multiplying with a reflection (see Lemma~\ref{Lem: basic properties}) and we conclude $l_R(w^{\setminus N})= l_R(w)-|N|$. With the same arguments we obtain $l_R(w^{\setminus N\cup \{i_j\}})= l_R(w^{\setminus N})-1$ for any $i_j\in \{i_1, \dots, i_q \}\setminus N$.
\end{proof}

For the powers of Coxeter elements in the universal Coxeter group, there exists a formula for the reflection length. 

\begin{Lem}[Lemma 7 in \cite{Drake2021}]\label{Lem: l_R in univ. group, powers of Cox element}
In the universal Coxeter group $W_n$ with $w= s_1\cdots s_n$ we have
\[
l_{R_n}(w^\lambda\cdot s_1\cdots s_r)= \lambda(n-2)+r
\]
for $1\leq r\leq n$ and $\lambda\in \NN_0$. 
\end{Lem}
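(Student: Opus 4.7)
The plan is to reformulate the problem via Dyer's theorem and reduce it to a combinatorial statement about non-crossing matchings. In $W_n$ the only Coxeter relations are $s_i^2=\id$, so a word is $S$-reduced iff no two consecutive letters are equal; hence $\bs := (s_1\cdots s_n)^\lambda s_1\cdots s_r$ is $S$-reduced of length $\lambda n + r$. Theorem~\ref{Thm: Dyer's Theorem} then gives
\[
l_{R_n}(\omega_n(\bs)) = \lambda n + r - \max\bigl\{\,|K| : K\subseteq\{1,\dots,\lambda n + r\},\ \omega_n(\bs|_K)=\id\,\bigr\}.
\]
Since $W_n$ has no braid relations, a word represents $\id$ iff it fully cancels under nil-moves, which is equivalent to its positions admitting a non-crossing perfect matching in which matched positions carry equal letters. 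Consequently $l_{R_n}(\omega_n(\bs)) = \lambda n + r - 2\mu(\bs)$, where $\mu(\bs)$ is the maximum size of such a matching, and the lemma reduces to the claim $\mu(\bs)=\lambda$.

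For the upper bound $\mu(\bs)\geq \lambda$, I would exhibit an explicit nested family: for $1\leq j\leq \lambda$, pair position $j$ with position $(\lambda - j + 1)n + j$. Both endpoints carry the letter $s_{((j-1)\bmod n)+1}$; the inequalities $\lambda > j$ and $n\geq 2$ show that pair $j+1$ is strictly nested inside pair $j$; and the hypothesis $r\geq 1$ ensures that every right endpoint lies within $[1,\lambda n + r]$.

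For the lower bound $\mu(\bs)\leq \lambda$, I would use strong induction on the word length, applied to all words of the form $(s_{\sigma(1)}\cdots s_{\sigma(n)})^{\lambda'}s_{\sigma(1)}\cdots s_{\sigma(r')}$ for permutations $\sigma$ of $S$ (the matching count is invariant under relabeling of $S$). The base $\lambda=0$ is immediate since $s_1\cdots s_r$ has all distinct letters. For the inductive step, let $M$ be a maximum matching and inspect position $1$. If it is unmatched, restrict $M$ to $[2,\lambda n + r]$; after the cyclic relabeling $s_i\mapsto s_{(i\bmod n)+1}$ the resulting sub-word again has the prescribed form with strictly smaller length, and the inductive hypothesis gives $|M|\leq \lambda$. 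If position $1$ is matched to the $s_1$ at some position $kn+1$, then $M$ decomposes into this outer pair together with independent matchings on $[2,kn]$ and on $[kn+2,\lambda n + r]$; each of these sub-words is, up to cyclic relabeling, of the prescribed form with strictly smaller length, and summing the inductive bounds yields $|M|\leq 1 + (k-1) + (\lambda - k) = \lambda$ (with the case $r=1$ even producing the stronger bound $\lambda - 1$).

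The main obstacle is the bookkeeping in the inductive step: one must verify that each sub-word arising is, after the appropriate cyclic relabeling of $S$, really of the form $(s_1\cdots s_n)^{\lambda'}s_1\cdots s_{r'}$ with the correct smaller parameters, and that the resulting bounds telescope exactly to $\lambda$ rather than to a larger value.
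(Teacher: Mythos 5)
Your argument is correct, but note that the paper does not prove this lemma at all: it is imported verbatim as Lemma~7 of \cite{Drake2021}, so you have supplied a self-contained proof where the source text has only a citation. Your route --- rewrite $l_{R_n}$ via Theorem~\ref{Thm: Dyer's Theorem} as (word length) minus twice the maximum size of a non-crossing partial matching with equal letters at matched positions, using that $W_n$ is a free product of copies of $\ZZ/2\ZZ$ so that triviality of a word is exactly full cancellation by nil-moves --- is sound, and I checked the two halves: the nested family $\{j,\,(\lambda-j+1)n+j\}$ does consist of equal-letter, mutually nested pairs inside $[1,\lambda n+r]$ (using $n\geq 2$ and $r\geq 1$), and in the induction the inner and outer factors produced by a matched first position $1\leftrightarrow kn+1$ are, after a cyclic relabeling, of the prescribed form with parameters $(k-1,\,n-1)$ and (for $r\geq 2$) $(\lambda-k,\,r-1)$, so the bounds telescope to $1+(k-1)+(\lambda-k)=\lambda$; the unmatched case and the $r=1$ case also close up. Two cosmetic points: your labels ``upper bound $\mu\geq\lambda$'' and ``lower bound $\mu\leq\lambda$'' are swapped relative to the inequalities as written (they are consistent only if read as bounds on $l_{R_n}$ rather than on $\mu$), and the whole statement implicitly requires $n\geq 2$, which your nesting verification uses and which is harmless since the cited source works with rank at least $3$.
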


 Therefore, the powers of Coxeter elements are the first candidates to find unbounded reflection length in other Coxeter groups as well. This is our starting point for comparing the reflection length functions of different Coxeter groups with the one in the universal Coxeter group of the same rank.

\section{Comparing reflection length in arbitrary and universal Coxeter groups}\label{Sec: Comparing}
In this section, we compare the reflection length function of an arbitrary Coxeter group with the reflection length function of the universal Coxeter group of the same rank. The braid relations are crucial for this. In comparison to the universal Coxeter group, the reflection length of an element possibly decreases by applying braid relations in an arbitrary Coxeter group.

\begin{Not}
Given a generating set $S$ with $n$ elements, we write $\omega_n$ for the canonic surjection $S^*\to W_n$.
\end{Not}

\begin{Lem} \label{Lem: l_Rn upper bound for l_R}
Let $(W,S)$ be an arbitrary Coxeter system of rank $n$ and let $R$ be the set of reflections in $W$. For any element $v\in W$ represented by a $S$-reduced word $\bs\in S^*$ the reflection length is bounded by
\[
l_R(v)\leq l_{R_n}(\omega_n(\bs)). 
\]

\end{Lem}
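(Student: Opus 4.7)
The plan is to apply Dyer's theorem (Theorem~\ref{Thm: Dyer's Theorem}) in parallel inside $W$ and inside $W_n$, and to transfer a deletion set from the universal side to $W$ along the canonical quotient homomorphism. First I would observe that because $W$ satisfies every defining relation of $W_n$ (the only relations in $W_n$ being the involution relations $s_i^2 = \id$), there is a well-defined surjective group homomorphism $\pi\colon W_n \twoheadrightarrow W$ with $\pi(s_i) = s_i$ for each $i$. In particular, any word over $S$ representing $\id$ in $W_n$ also represents $\id$ in $W$.

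Next I would verify that $\bs$ is still a reduced expression when read in $W_n$. Since $\bs$ is $S$-reduced in $W$, it contains no consecutive subword of the form $ss$ (otherwise a nil-move would shorten it, contradicting $S$-reducedness in $W$). But in the free product $W_n \cong \ZZ/2\ZZ \ast \cdots \ast \ZZ/2\ZZ$, the absence of consecutive repeated letters is precisely the criterion for reducedness, so $\bs$ is $S$-reduced in $W_n$ as well.

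With these two observations in hand, the rest is mechanical. Applying Theorem~\ref{Thm: Dyer's Theorem} inside $W_n$ to the reduced word $\bs$ produces indices $1 \leq i_1 < \cdots < i_q \leq p$ with $q = l_{R_n}(\omega_n(\bs))$ such that the word obtained from $\bs$ by deleting precisely these letters represents $\id$ in $W_n$. Pushing forward along $\pi$, the same shortened word also represents $\id$ in $W$. Since $\bs$ itself is $S$-reduced in $W$, a second application of Theorem~\ref{Thm: Dyer's Theorem}, this time inside $W$, yields $l_R(v) \leq q = l_{R_n}(\omega_n(\bs))$, as claimed. I do not anticipate any genuine obstacle: the heart of the argument is the simple observation that $S$-reducedness in $W$ forces $S$-reducedness in the larger (freer) group $W_n$, so the very same word together with the very same deletion set can be fed into Dyer's theorem on both sides of the quotient map.
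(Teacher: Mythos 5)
Your proof is correct, but it takes a different route from the paper's. You run Dyer's theorem (Theorem~\ref{Thm: Dyer's Theorem}) on both sides of the canonical quotient $\pi\colon W_n\twoheadrightarrow W$: reducedness of $\bs$ in $W$ forces reducedness in $W_n$ (no consecutive repeated letters, which is the free-product normal form criterion), a minimal deletion set for $\bs$ in $W_n$ still kills the word in $W$ after applying $\pi$, and the minimality in Dyer's characterization of $l_R$ then gives the inequality. The paper instead argues on the level of reflection factorisations: it takes an $R_n$-reduced factorisation $\omega_n(\mathbf{r}_1\cdots\mathbf{r}_l)=\omega_n(\bs)$, uses Tits's solution to the word problem (Theorem~\ref{Thm: solution word problem}) and the absence of braid relations in $W_n$ to conclude that each $\mathbf{r}_i$ is a palindrome and that $\mathbf{r}_1\cdots\mathbf{r}_l$ reduces to $\bs$ by nil-moves alone, whence each $\omega(\mathbf{r}_i)$ is a reflection in $W$ and their product is $v$. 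The two arguments are dual to one another via the standard correspondence between deletion sets and reflection factorisations (cf.\ Lemma~\ref{Lem: After Dyer}), but yours is somewhat more mechanical: it needs only the quotient homomorphism and the transfer of reducedness, and avoids the structural claim that reflections in the universal group are represented by palindromes. The paper's version has the advantage of producing an explicit reflection factorisation of $v$ in $W$, which is closer in spirit to how the comparison with $W_n$ is exploited in the rest of Section~\ref{Sec: Comparing}.
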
 
\begin{proof}
Let $\omega_n(\mathbf{r}_1\cdots \mathbf{r}_l)= \omega_n(\bs)$ be a $R_n$-reduced reflection factorisation in $W_n$. The absence of braid relations in universal Coxeter groups implies that the word $\mathbf{r}_1\cdots \mathbf{r}_l$ can be transformed to $\bs$ with a sequence of nil-moves and  that the words $\mathbf{r}_i\in S^*$ are palindromes (see Theorem \ref{Thm: solution word problem}). So we have $\omega(\mathbf{r}_i)\in R$ for all $\mathbf{r}_i$ and $v = \omega(\mathbf{r}_1)\cdots \omega(\mathbf{r}_l)$. 
\end{proof}

\begin{Lem}\label{Lem: word representing 1 with briad move contains (s_is_j)^{m_{ij}}}
Let $(W, S)$ be a Coxeter system with presentation $\langle S \mid \mathcal{R} \rangle$. Let $\bs\in S^*$ be a word with $\omega(\bs) = \id$. If a braid-minimalistic sequence of nil-moves and braid-moves $(\alpha_i)_{i\leq a}$ transforming $\bs$ into the empty word $e$ contains a braid-move, $\bs$ contains a subword of the form $(s_is_j)^{m_{ij}}$ with $(s_is_j)^{m_{ij}}$ in $\mathcal{R}$.
\end{Lem}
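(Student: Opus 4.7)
The plan is to induct on the length $|\bs|$. The base case $|\bs|=0$ is vacuous, as the sequence is empty. For the inductive step, suppose first that $\bs$ contains a consecutive pair $ss$. By property~(1) of Definition~\ref{Def: (MB)}, $\alpha_1$ is a nil-move removing such a pair, producing a shorter word $\bs'$ with $\omega(\bs')=\id$; the tail $(\alpha_i)_{i\geq 2}$ is braid-minimalistic for $\bs'$ and still contains the non-zero number of braid-moves, so by induction $\bs'$ contains $(s_is_j)^{m_{ij}}$ as a subword, and hence so does $\bs$.

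So I may assume $\bs$ has no $ss$ pair, whence $\alpha_1$ is a braid-move on a consecutive subword $\mathbf{b}_{ij}$. The strategy is to locate the required subword by focusing on the \emph{last} braid-move $\alpha_k$ in the sequence: after $\alpha_k$ only nil-moves appear, so the word $\bs_k$ immediately following $\alpha_k$ satisfies $\omega_n(\bs_k)=\id$ in the universal Coxeter group $W_n$. Writing $\bs_{k-1}=\mathbf{A}^{\ast}\,\mathbf{b}_{i'j'}\,\mathbf{C}^{\ast}$ and $\bs_k=\mathbf{A}^{\ast}\,\mathbf{b}_{j'i'}\,\mathbf{C}^{\ast}$, I would analyse the iterated nil-cancellation of $\mathbf{A}^{\ast,\mathrm{red}}\cdot \mathbf{b}_{j'i'}\cdot \mathbf{C}^{\ast,\mathrm{red}}=e$ in the free product $W_n$ (where $\mathbf{A}^{\ast,\mathrm{red}}$, $\mathbf{C}^{\ast,\mathrm{red}}$ are reduced forms obtained by deleting $ss$-pairs, hence themselves subwords of $\mathbf{A}^{\ast}$, $\mathbf{C}^{\ast}$). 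Since $\mathbf{b}_{j'i'}$ must be consumed entirely, a suffix of $\mathbf{A}^{\ast,\mathrm{red}}$ of some length $k$ with $0\leq k\leq m_{i'j'}$ cancels its first $k$ letters and a prefix of $\mathbf{C}^{\ast,\mathrm{red}}$ cancels its last $m_{i'j'}-k$ letters; by the alternating structure of $\mathbf{b}_{j'i'}$, that suffix and that prefix are themselves alternating $s_{i'},s_{j'}$-words. Concatenating the suffix with the \emph{unpermuted} block $\mathbf{b}_{i'j'}$ (as it sits in $\bs_{k-1}$ rather than $\bs_k$) and then with the prefix produces an alternating word of total length $k+m_{i'j'}+(m_{i'j'}-k)=2m_{i'j'}$, namely $(s_{i'}s_{j'})^{m_{i'j'}}$ or $(s_{j'}s_{i'})^{m_{i'j'}}$; either lies in $\mathcal{R}$.

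Since $\mathbf{A}^{\ast,\mathrm{red}}$ and $\mathbf{C}^{\ast,\mathrm{red}}$ are subwords of $\mathbf{A}^{\ast}$ and $\mathbf{C}^{\ast}$, this exhibits the required alternating subword inside $\bs_{k-1}$. The final task is to lift this subword back along the moves $\alpha_1,\ldots,\alpha_{k-1}$ to $\bs$: nil-moves only delete letters and so trivially preserve subword inclusion, but braid-moves permute letters within an alternating block and may disturb a chosen subword. The principal obstacle I foresee is exactly this braid-lift: if an intermediate braid-move acts on the pair $\{i',j'\}$ with $m_{i'j'}$ odd, the counts of $s_{i'}$ and $s_{j'}$ in $\mathbf{b}_{i'j'}$ versus $\mathbf{b}_{j'i'}$ differ by one, so a naive transfer of the chosen positions can fail. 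I would handle this either by strengthening the inductive statement to propagate the subword forward through each move, or by a parity case-analysis exploiting the symmetry $(s_{i'}s_{j'})^{m_{i'j'}}\leftrightarrow(s_{j'}s_{i'})^{m_{i'j'}}$ to freely choose the starting letter of the alternation within each affected block.
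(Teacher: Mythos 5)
Your localisation of the relation word inside the intermediate word $\bs_{k-1}$ is correct and is in fact cleaner than the paper's corresponding step: since only nil-moves follow the last braid-move, $\omega_n(\bs_k)=\id$ holds in the universal group, and the non-crossing cancellation pattern in the free product forces each letter of the permuted block $\mathbf{b}_{j'i'}$ to cancel against a letter outside the block, with a single split point separating the letters cancelled to the left from those cancelled to the right; the cancelling letters form a subword of $\mathbf{A}$ equal to the reversed prefix and a subword of $\mathbf{C}$ equal to the reversed suffix, and wrapping these around the unpermuted block $\mathbf{b}_{i'j'}$ yields an alternating word of length $2m_{i'j'}$, hence a relation word, as a subword of $\bs_{k-1}$. (Minor point: these cancelling letters form subwords, not literal suffixes or prefixes, of $\mathbf{A}$ and $\mathbf{C}$, but a subword is all you need.)

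The gap is the step you yourself flag: transporting this subword from $\bs_{k-1}$ back to $\bs$ through $\alpha_1,\dots,\alpha_{k-1}$. This is not a deferrable technicality --- it is where the entire difficulty of the lemma sits, and your two proposed remedies are descriptions of strategies, not arguments. Concretely, going backward through a braid-move $\mathbf{b}_{xy}\mapsto\mathbf{b}_{yx}$ with $m_{xy}$ odd, the number of occurrences of $y$ in the block drops from $(m_{xy}+1)/2$ to $(m_{xy}-1)/2$, so if your chosen alternating word uses all copies of $y$ in $\mathbf{b}_{yx}$ (which can happen when $y\in\{i',j'\}$ and $x\notin\{i',j'\}$) it cannot be re-embedded position-wise, and one must produce a genuinely different relation subword of the earlier word, possibly for a different pair of generators. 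Likewise, when $\{x,y\}=\{i',j'\}$ your subword may contain the entire permuted block $\mathbf{b}_{yx}$, which is never a subword of $\mathbf{b}_{xy}$. Note also that your argument never invokes condition (2) of Definition~\ref{Def: (MB)}; the paper instead inducts on the number of braid-moves so that only one braid-move has to be lifted at a time, and it uses condition (2) precisely to control the case in which that braid-move acts on the letters of the already-found relation word. Until the braid-lift is carried out in detail (or the induction restructured so that it is never needed), the proof is incomplete.
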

\begin{proof}
We prove the assertion by induction over the number of braid-moves in $(\alpha_i)_{i\leq a}$. If $(\alpha_i)_{i\leq a}$ contains exactly one braid-move $\mathbf{b}_{12}\mapsto\mathbf{b}_{21}$, every letter in $\mathbf{b}_{21}$ is cancelled by a nil-move with a letter in $\bs$ outside of $\mathbf{b}_{21}$. Hence, $\bs$ either contains $(s_1s_2)^{m_{12}}$ or $(s_2s_1)^{m_{21}}$ as a subword. Additionally, the braid-move $\mathbf{b}_{12}\mapsto\mathbf{b}_{21}$ is executed on a subword of $(s_1s_2)^{m_{12}}$ or $(s_2s_1)^{m_{21}}$.\par
Assume that $(\alpha_i)_{i\leq a}$ contains $(n+1)$ many braid-moves and is braid-minimalistic. Let $\alpha_b$ be the first braid-move in the sequence. The sequence $(\alpha_i)_{b<i\leq a}$ transforms the word $\bs' :=\alpha_b\circ\cdots\circ\alpha_1(\bs)$ into the empty word and contains $n$ braid-moves. According to the induction assumption, $\bs'$ contains a subword $\mathbf{r}$ of the form $(s_is_j)^{m_{ij}}$ with $(s_is_j)^{m_{ij}}$ in $\mathcal{R}$.  Braid-moves on subwords of even $S$-length do not change the number of letters of a certain type.  Braid-moves on subwords of odd $S$-length change the number of letters of a certain type by $\pm 1$ (see Remark~\ref{Rem: Braid-moves do(n't) change number of letters in word of specific type}). In general, a word $\mathbf{b}_{ij}$ contains at least one $s_i$ and one $s_j$. So in case $\alpha_b$ is not a braid-move solely on letters of $\mathbf{r}$, we can conclude directly that $\bs$ contains a subword of the form $(s_is_j)^{m_{ij}}$ with $(s_is_j)^{m_{ij}}$ in $\mathcal{R}$.\par
Consider the case where $\alpha_b$ is a braid-move solely on letters of $\mathbf{r}$. Following the induction hypothesis, there exists another braid-move $\alpha_j$ in $(\alpha_i)_{i\leq a}$ on a subword of $\mathbf{r}$. Together with being braid-minimalistic, this implies that one of the braid-moves $\alpha_b$ and $\alpha_j$ acts on the first half of $(s_is_j)^{m_{ij}}$ and one on the second. There exists a letter $s$ in $\mathbf{r}$ that is not touched by $\alpha_b$ and is adjacent to a letter touched by $\alpha_b$ in $\mathbf{r}$. In $\alpha_{b-1}\circ\cdots\circ\alpha_1(\bs)$ this letter $s$ can't be adjacent to a letter touched by $\alpha_b$, because this contradicts being braid-minimalistic. Let $\bar{\bs}$ be the consecutive subword of $\alpha_{b-1}\circ\cdots\circ\alpha_1(\bs)$ separating $s$ and a letter that is touched by $\alpha_b$.\par 
 If $\omega(\bar{\bs}) =\id$ via a subsequence of $(\alpha_i)_{i\leq a}$, we can apply the induction hypothesis. Correspondingly, $\bar{\bs}$ contains a subword of the form $(s_ks_l)^{m_{kl}}$ with $(s_ks_l)^{m_{kl}}$ in $\mathcal{R}$. Since $\bar{\bs}$ is a subword of $\bs$, the same holds for $\bs$.\par
If there is no subsequence of $(\alpha_i)_{i\leq a}$ that transforms $\bar{\bs}$ into $e$, there are no nil-moves  within the letters of $\mathbf{r}$. Since $\omega(\bs) = \id$, the letters in $\mathbf{r}$ have to be cancelled by nil-moves in pairs with letters outside of $\mathbf{r}$. The properties of possible braid-moves on these letters outside of $\mathbf{r}$ imply as before that $\bs$ contains a subword of the form $(s_is_j)^{m_{ij}}$ in this case, too.
\end{proof}

\begin{Lem}\label{Lem: r_R = univ. CoxGrp.}
Let $(W,S)$ be an arbitrary Coxeter system of rank $n$ with presentation $\langle S \mid \mathcal{R} \rangle$ and relations $\mathcal{R}$. Further, let $l_R$ be the reflection length function in $W$ and $v\in W$ be an element represented by a $S$-reduced word $\bs$.
If $v$ has no $S$-reduced expression that contains subwords of the form $(s_is_j)^{m_{ij}}$ %with $l_S(s_is_js_i\cdots s)=m_{ij}$ 
for all $(s_is_j)^{m_{ij}}$ in $\mathcal{R}$, the reflection length of $v$ is 
\[
l_R(v)=l_{R_n}(\omega_n(\bs)). 
\]
 
\end{Lem}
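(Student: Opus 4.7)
The plan is to establish both inequalities. The upper bound $l_R(v) \leq l_{R_n}(\omega_n(\bs))$ is immediate from Lemma~\ref{Lem: l_Rn upper bound for l_R}, so the entire work lies in proving the reverse inequality $l_{R_n}(\omega_n(\bs)) \leq l_R(v)$. The strategy is to invoke Dyer's theorem (Theorem~\ref{Thm: Dyer's Theorem}) in both $W$ and $W_n$: a deletion set realising $l_R(v)$ in $W$ will, under the hypothesis, also witness the reflection length of $\omega_n(\bs)$ in $W_n$.

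First I would record the auxiliary observation that $\bs$ is $S$-reduced in $W_n$ as well. Indeed, any shortening of $\bs$ in $W_n$ must use only nil-moves because there are no braid relations in $W_n$, and any such sequence is equally valid in $W$, which would contradict $\bs$ being $S$-reduced in $W$. Next, apply Theorem~\ref{Thm: Dyer's Theorem} in $W$ to obtain a deletion set $D(\bs) = \{i_1,\dots,i_q\}$ with $q = l_R(v)$, so that the word $\tilde{\bs}$ obtained by omitting these letters satisfies $\omega(\tilde{\bs}) = \id$ in $W$. Lemma~\ref{Lem: There always exists a sequence of nil and braid moves that is MB} then supplies a braid-minimalistic sequence $(\alpha_i)_{i\leq a}$ of nil- and braid-moves transforming $\tilde{\bs}$ to the empty word.

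The crux is the claim that $(\alpha_i)_{i\leq a}$ contains no braid-moves. If one appeared, Lemma~\ref{Lem: word representing 1 with briad move contains (s_is_j)^{m_{ij}}} would force $\tilde{\bs}$ to contain a subword of the form $(s_is_j)^{m_{ij}}$ for some $(s_is_j)^{m_{ij}}\in\mathcal{R}$. Since $\tilde{\bs}$ is a subsequence of $\bs$, this subword also appears in $\bs$, which is an $S$-reduced expression for $v$; this directly contradicts the hypothesis. Hence $(\alpha_i)_{i\leq a}$ consists purely of nil-moves, and because nil-moves rely only on the relations $s^2=\id$ present in both $W$ and $W_n$, the same sequence establishes $\omega_n(\tilde{\bs}) = \id$ in $W_n$. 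Applying Theorem~\ref{Thm: Dyer's Theorem} in $W_n$ to the $S$-reduced expression $\bs$ together with the index set $D(\bs)$ yields $l_{R_n}(\omega_n(\bs)) \leq |D(\bs)| = l_R(v)$, which combined with Lemma~\ref{Lem: l_Rn upper bound for l_R} gives the desired equality.

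The main obstacle is the key step where one must rule out the use of braid-moves in the minimalistic reduction of $\tilde{\bs}$. This depends on the fact, recorded earlier in the paper, that ``subword'' means \emph{subsequence}, so that any forbidden pattern surviving in $\tilde{\bs}$ must already be present in $\bs$. Once this transfer is made, the rest of the argument is a clean double application of Dyer's theorem together with the trivial portability of nil-moves from $W$ to $W_n$.
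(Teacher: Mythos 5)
Your proposal is correct and follows essentially the same route as the paper: take a deletion set realising $l_R(v)$, pass to a braid-minimalistic reduction of the deleted word, use Lemma~\ref{Lem: word representing 1 with briad move contains (s_is_j)^{m_{ij}}} to rule out braid-moves under the hypothesis, and transfer the nil-move-only reduction to $W_n$ before combining with Lemma~\ref{Lem: l_Rn upper bound for l_R}. Your explicit check that $\bs$ remains $S$-reduced in $W_n$ (needed to apply Theorem~\ref{Thm: Dyer's Theorem} there) is a small point the paper leaves implicit, but otherwise the arguments coincide.
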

\begin{proof}
Let $D(\bs)$ be a deletion set.  
Let $\bs'$ be the word we obtain from $\bs$ by deleting the letters in $D(\bs)$. It is $\omega(\bs') = \id$ in $W$. According to Theorem \ref{Thm: solution word problem}, there exists a sequence of nil-moves and braid-moves that transforms $\bs'$ into the empty word. \newline
Under the assumptions, we show that there exists such a sequence without a braid-move. This implies the lemma because nil-moves are also allowed in $W_n$ and for all $\mathbf{a}\in S^*$ we have $l_R(\omega(\mathbf{a}))\leq l_{R_n}(\omega_n(\mathbf{a}))$ (see Lemma \ref{Lem: l_Rn upper bound for l_R}).\par
Fix such a braid-minimalistic sequence $(\alpha_i)_{i\leq a}$ and
assume that it contains a braid-move (cf. Lemma~\ref{Lem: There always exists a sequence of nil and braid moves that is MB}).
 With Lemma~\ref{Lem: word representing 1 with briad move contains (s_is_j)^{m_{ij}}}, we conclude that $\bs'$ contains a subword of the form $(s_is_j)^{m_{ij}}$ with $(s_is_j)^{m_{ij}}$ in $\mathcal{R}$. Hence, the same holds for $\bs$ and we arrive at a contradiction. Correspondingly, the sequence $(\alpha_i)_{i\leq a}$ contains no braid-moves. This proves the assertion.

%Braid-moves don't change the length of the word and, since $w(\bs')=\id$, all letters in $\mathbf{b}_{21}$ are cancelled by nil-moves.\marco{mention here that other braid-moves may lower the occurrence of one type of letter in general by $-1$ in a word, but then at least contain two times this letter type}.  So there are $m_{12}$ letters contained in $\bs'$ alternating between $s_1$ and $s_2$ (from left to right) cancelling $\mathbf{b}_{21}$. This implies that  $\bs'$ contains either $(s_1s_2)^{m_{12}}$ or $(s_2s_1)^{m_{12}}$ as a subword. This is a contradiction, because we suppose that $w(\bs)$ has no $S$-reduced expression that contains subwords of the form $(s_is_j)^{m_{ij}}$  
%for all $(s_is_j)^{m_{ij}}$ in $\mathcal{R}$. So subwords of this form also do not occur in $\bs'$.\newline
%Accordingly, the sequence $\alpha$ doesn't contain a braid-move and only nil-moves. It follows $w_n(\bs')= \id$ and the lemma is proven.
\end{proof}

\begin{Def}
The \textit{indicator function} $\id_A$ of a subset $A$ of a set $X$ is defined as
\[
\id_A:X\to \{0,1\};\quad x\mapsto\
		\begin{cases}
			1 & x\in A\\
			0 & x\notin A\\
		\end{cases}.
\]
For convenience, we write for example $\id_{\{2,3,\dots\}}(a)$ as $\id_{a\geq 2}$ for $a \in \mathbb{Z}$ and do so analogously in similar cases.
\end{Def}

Together with Lemma \ref{Lem: r_R = univ. CoxGrp.}, the Lemma above implies that the reflection length of powers of a Coxeter element in arbitrary Coxeter systems behaves like in the universal case, if the power of the Coxeter element is small enough in relation to the braid relations of the Coxeter group.

\begin{Cor}\label{Cor: l_R= l_Rn in our Situation}
Let $(W,S)$ be an arbitrary Coxeter system of rank $n$ with  reflection length function $l_R$ and $\mathbf{w}= s_1\cdots s_n$. The following equality holds
\[
l_R(\omega(\mathbf{w}^\lambda \cdot s_1\cdots s_r))=l_{R_n}(\omega_n(\mathbf{w}^\lambda \cdot s_1\cdots s_r)) = \lambda(n-2)+r
\]
if $\lambda+\id_{\{r\geq 2\}}<\min \{m({s_i, s_j})\mid s_i,s_j\in S\}$. 
\end{Cor}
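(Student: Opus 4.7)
The strategy is to verify the hypothesis of Lemma~\ref{Lem: r_R = univ. CoxGrp.} for $\bs := \mathbf{w}^\lambda \cdot s_1 \cdots s_r$ and then read off the numerical value from Lemma~\ref{Lem: l_R in univ. group, powers of Cox element}. Since the second equality of the corollary is precisely that lemma, the substantive task is to establish the first equality, for which I need $\bs$ to be $S$-reduced in $W$ and to contain no subword of the form $(s_is_j)^{m_{ij}}$.

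First, I would verify the $S$-reducedness of $\bs$ in $W$. Because $\bs$ has no adjacent equal letters, it is $S$-reduced in the universal Coxeter group $W_n$. If $\bs$ were not $S$-reduced in $W$, then Theorem~\ref{Thm: solution word problem} would produce a length-reducing braid-minimalistic sequence on $\bs$; applying the reasoning of Lemma~\ref{Lem: word representing 1 with briad move contains (s_is_j)^{m_{ij}}} to the braid-moves appearing in such a sequence would force $\bs$ to contain a subword of the form $(s_is_j)^{m_{ij}}$, which the count argument of the next step excludes. (When $W$ is infinite and irreducible, this can alternatively be deduced from Speyer's Theorem~\ref{Thm: Speyer's Theorem} applied to the Coxeter-element prefix.)

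Second, I would verify that $\bs$ contains no subword (in the subsequence sense) of the form $(s_is_j)^{m_{ij}}$. The generator $s_i$ occurs in $\bs$ exactly $\lambda + \id_{i \le r}$ times, and such a subword requires $m_{ij}$ copies of both $s_i$ and $s_j$. If $r=0$, every count equals $\lambda < \min m_{ij}$, so no letter meets the threshold. If $r=1$, only $s_1$ attains count $\lambda+1 \le \min m_{ij}$, but any partner $s_j$ with $j \neq 1$ has count $\lambda < m_{1j}$, so a pair cannot both reach $m_{1j}$. If $r\ge 2$, the hypothesis forces $\lambda+1 < m_{ij}$, bounding every count strictly below $m_{ij}$. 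In each regime two generators cannot simultaneously appear $m_{ij}$ times in $\bs$, ruling out the forbidden subword. With this verified, Lemma~\ref{Lem: r_R = univ. CoxGrp.} delivers $l_R(\omega(\bs)) = l_{R_n}(\omega_n(\bs))$, and Lemma~\ref{Lem: l_R in univ. group, powers of Cox element} evaluates the right-hand side as $\lambda(n-2)+r$.

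The main obstacle I anticipate is the reducedness check, since the hypothesis does not immediately certify that $W$ is infinite or irreducible, so Speyer's theorem is not universally invocable. The same count bound on generator multiplicities that excludes the forbidden subword is exactly what propagates to block any would-be length-reducing braid-minimalistic sequence, and this is what closes both steps uniformly.
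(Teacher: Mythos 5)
Your main line --- count the occurrences of each generator in $\bs=\mathbf{w}^\lambda s_1\cdots s_r$, conclude that no subword of the form $(s_is_j)^{m_{ij}}$ can occur, then apply Lemma~\ref{Lem: r_R = univ. CoxGrp.} followed by Lemma~\ref{Lem: l_R in univ. group, powers of Cox element} --- is exactly how the paper derives this corollary, and your count ($s_i$ occurs $\lambda+\id_{i\leq r}$ times, while the relator needs $m_{ij}$ copies of each of two distinct generators, and the second-most-frequent letter of any pair occurs at most $\lambda+\id_{r\geq 2}$ times) is correct.

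The gap is in your treatment of $S$-reducedness. Lemma~\ref{Lem: word representing 1 with briad move contains (s_is_j)^{m_{ij}}} is a statement about words representing the \emph{identity}; its proof hinges on every letter eventually being cancelled by a nil-move, and that reasoning does not transfer to an arbitrary non-reduced word. Concretely, in the regime the hypothesis still permits, namely $\min m_{ij}=2$ (which forces $\lambda\le 1$), take $W$ of rank $3$ with all $m_{ij}=2$ and $\bs=s_1s_2s_3s_1$: this word is not $S$-reduced, every braid-minimalistic reducing sequence must begin with braid-moves (commutations), and yet $\bs$ contains no subword $(s_is_j)^{m_{ij}}=s_is_js_is_j$, since $s_2$ and $s_3$ each occur only once. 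So your claim that non-reducedness would force the forbidden subword is false, and the count argument does not ``close both steps uniformly.'' The repair is easy but runs differently: when $n\ge 3$ and all $m_{ij}\ge 3$, any three consecutive letters of $\bs$ are pairwise distinct, so no nil-move and no braid-move applies to $\bs$ at all, and Theorem~\ref{Thm: solution word problem}(i) then forces $\bs$ to be $S$-reduced (indeed its unique reduced expression, which also disposes of the ``no $S$-reduced expression contains a relator'' form of the hypothesis of Lemma~\ref{Lem: r_R = univ. CoxGrp.}). The degenerate cases $\min m_{ij}=2$ (and $n=2$), where the word can genuinely fail to be reduced, need a separate elementary argument --- e.g.\ conjugating $\omega(s_1\cdots s_ns_1)$ to the Coxeter element $\omega(s_2\cdots s_n)$ of a rank-$(n-1)$ standard parabolic --- rather than the mechanism you propose.
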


Lemma \ref{Lem: r_R = univ. CoxGrp.} shows that if an element has a $S$-reduced expression without subwords of the form $(s_is_j)^{m_{ij}}\in \mathcal{R}$, the reflection length of the element represented by a $S$-reduced word in an arbitrary Coxeter group is equal to the reflection length of the element represented by the same word in the universal Coxeter group of the same rank. Now, we consider arbitrary words. So the corresponding elements in the Coxeter groups may have different reflection lengths.

We are now ready to prove Theorem~\ref{Thm: Lower bound}:

\begin{proof}[Proof of Theorem~\ref{Thm: Lower bound}] 
We have $w(\tilde{\bs}) = \id$. Thus, a finite sequence $(\sigma_1, \dots, \sigma_t)$ of nil-moves and braid-moves transforms the word $\tilde{\bs}$ into the empty word $e$ (see Theorem~\ref{Thm: solution word problem}). There exist no braid-moves in $W_n$. So if there exists a sequence of only nil-moves that transforms $\tilde{\bs}$ into $e$, we have $w_n(\tilde{s})=\id$ and therefore $l_R(x)= l_{R_n}(w(\bs))$.\par
Assume that  $(\sigma_1, \dots. \sigma_t)$ contains exactly $m$ braid-moves. This translates into a finite sequence of words 
\[
(\bs_1 = \bs'_1 \mathbf{b}_{i_1j_1} \bs''_1, \dots, \bs_m = \bs'_m \mathbf{b}_{i_mj_m} \bs''_m, \bs_{m+1}),
\]
where $\bs_1$ is obtained from $\tilde{\bs}$ by finitely many nil-moves, $\bs_{l+1}$ is obtained from $\bs_{l}$ by applying the braid-braid move $\mathbf{b}_{i_lj_l}\mapsto \mathbf{b}_{j_l i_l}$ and finitely many nil-moves. The last entry $\bs_{m+1}$ is transformable into the empty word only by nil-moves. This is why we have $\omega(\bs_{m+1})) = \omega_n(\bs_{m+1})= \id$. \par
On the level of group elements, we have $\omega(\tilde{\bs})= \omega(\bs_i)= \id$ and in the universal Coxeter group $W_n$
\[
\omega_n(\bs_{l+1}) = \omega_n(\bs_l)\cdot r_lr'_l,
\]
where $r_l,r'_l$ are reflections in $W_n$. Executing a braid-move on a word is equivalent to removing the first (most left) letter and adding one letter on the right side of $\mathbf{b}_{ij}$: 
\[
s_is_j\cdots s_i \mapsto \hat{s}_is_j\cdots s_i\cdot s_j.
\]
Removing as well as adding one letter translates to multiplying with a reflection from the left or right on the level of group elements.
Applying Lemma~\ref{Lem: basic properties} for every transition between elements in the sequence $\omega_n(\bs_1), \dots, \omega_n(\bs_{m+1}))$, we obtain 
$
l_{R_n}(\omega_n(\tilde{\bs}))\leq 2m. 
$
Together with Theorem \ref{Thm: Dyer's Theorem}, this implies
\[
l_{R_n}(\omega_n(\bs))\leq l_R(\omega(\bs))+2m,
\]
which is equivalent to the assertion.
\end{proof}

\begin{Rem}
Even though this lower bound is sharp in some cases, it isn't in general. Take for example the \sib Coxeter group $W^3_3$ with generating set $S=\{s_1,s_2,s_3\}$. The element $v\in W^3_3$ represented by the word 
\[
\mathbf{r}  := s_1s_2s_1s_3s_1s_3s_2s_1s_2\in S^*
\]
has reflection length $1$, where $l_{R_n}(\omega_n(\mathbf{r})) = 3$. This can be seen by removing the letter in the middle (see Theorem~\ref{Thm: Dyer's Theorem}). For $W^3_3$, we obtain the identity. For $W_n$, we don't. Thus, the reflection length $l_{R_n}(\omega_n(\mathbf{r}))$ has to be at least 3, because of parity reasons. The sequence of word transformations is
\begin{align*}
s_1s_2s_1s_3\hat{s}_1s_3s_2s_1s_2\mapsto s_1s_2s_1\cdot s_2s_1s_2\mapsto s_2s_1s_2 \cdot s_2s_1s_2\mapsto e.
\end{align*}
Here, the last $\mapsto$ includes 3 nil-moves. According to the theorem, we have 
\[
l_{R_n}(\omega_n(\mathbf{r}))-2\leq l_R(v)
\]
and inserting the values leads to $1\leq 1$. So in this case, the bound is sharp.\par
On the other hand, in Example \ref{Example: Counter example to lower bound theorem} there are $3$ braid-moves executed for the element $w^4s_1s_2\in W^3_3$. We know from Lemma \ref{Lem: l_R in univ. group, powers of Cox element} that $l_{R_n}(\omega_n((s_1s_2s_3)^4s_1s_2))=6$. Consequently, the lower bound from the theorem is $0$ but the reflection length of $w^4s_1s_2$ is $2$.
\end{Rem}

\begin{Rem}
The lower bound also depends on the deletion set $D(\bs)$. Consider the word $\mathbf{t}= s_3s_1s_2s_1s_3s_2s_1s_2$ in a Coxeter group $W = \langle\{s_1,s_2,s_3\}\mid \mathcal{R}\rangle$, in which $(s_1s_2)^3 \in \mathcal{R}$ and no braid relation exists with $m_{ij}=2$. It is $l_R(\omega(\mathbf{t})) = l_{R_n}(\omega_n(\mathbf{t}))$ and the reflection length is $2$, because of parity reasons and the following deletion sets 
\[
\hat{s}_3s_1s_2s_1\hat{s}_3s_2s_1s_2\qquad\text{and}\qquad s_3s_1\hat{s}_2s_1s_3s_2\hat{s}_1s_2.
\]
Applying the theorem above with the left set leads to a lower bound of $0$ whereas the right side leads to a sharp lower bound of $2$. 
In the case of the word $\mathbf{r}$ from the last remark, the lower bound can't be sharpened by a different choice of a deletion set, because the only deletion set for a reduced word representing a reflection is the letter in the middle.\par
An immanent question is under which circumstances the lower bound is sharp. Follow-up questions are if the lower bound is sharp for special deletion sets  and if the sharpness only holds for certain elements. These questions won't be discussed further in this work.
\end{Rem}

The lower bound of Theorem \ref{Thm: Lower bound} can be improved by the following lemmas. The lemmas are based on the observation that a finite sequence of braid-moves is in some cases equivalent to the concatenation with 2 reflections.

\pagebreak

\begin{Lem}\label{Lem: braid moves interfering in one letter}
Let $(W,S)$ be a Coxeter system. Further, let $\sigma =\sigma_n \circ\dots\circ\sigma_1$ be a sequence of braid-moves on a word $\bs\in S^*$ with $\sigma_i : \mathbf{b}^i_{x_iy_i}\mapsto \mathbf{b}^i_{y_ix_i}$ for $x_i, y_i\in S$. If the last letter of $\mathbf{b}^i_{y_ix_i}$ is the first letter of $\mathbf{b}^{i+1}_{x_{i+1}y_{i+1}}$ for all $i\in \{1,\dots, n-1\}$, then the execution of $\sigma$ on the word $\bs$ is equivalent to multiplying with two reflections on a group-element-level in $W_n$:
\[
\omega(\sigma(\bs)) = \omega(\bs), \;\; \omega_n(\sigma(\bs)) = \omega_n(\bs)\cdot r_1\cdot r_2\quad\text{with}\quad r_i\in R_n.
\]
\end{Lem}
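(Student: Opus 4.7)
The identity $\omega(\sigma(\bs))=\omega(\bs)$ is immediate since each $\sigma_i$ encodes a defining relation of $W$. The substantive claim is the formula in $W_n$, where the braid relations are absent.

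The main combinatorial observation is that the overlap condition forces the braid-words to appear in $\bs$ in a telescoping pattern: since $\mathbf{b}^{i+1}_{x_{i+1}y_{i+1}}$ begins in $\bs_i$ at the same position where $\mathbf{b}^i_{y_ix_i}$ ends, and $\sigma_i$ only modifies the positions of $\mathbf{b}^i_{x_iy_i}$, the original word $\bs$ must have the form
\[
\bs = L\cdot \mathbf{b}^1_{x_1y_1}\cdot \mathbf{b}^2_{x_2y_2}[2,m_2]\cdots \mathbf{b}^n_{x_ny_n}[2,m_n]\cdot N,
\]
while the fully-transformed word has the form
\[
\sigma(\bs) = L\cdot \mathbf{b}^1_{y_1x_1}[1,m_1-1]\cdots \mathbf{b}^{n-1}_{y_{n-1}x_{n-1}}[1,m_{n-1}-1]\cdot \mathbf{b}^n_{y_nx_n}\cdot N,
\]
where $L$ and $N$ denote the unaffected prefix and suffix and $[a,b]$ denotes the subword from position $a$ to $b$. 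I would verify this decomposition by induction on $n$, tracking how each successive $\sigma_i$ modifies only positions $p_i,\ldots,p_i+m_i-1$ and shares exactly one boundary position with its predecessor (overwriting the letter that $\sigma_{i-1}$ produced there).

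Next, I would evaluate both sides in $W_n$ using the elementary identity
\[\omega_n(\mathbf{b}^i_{x_iy_i}[2,m_i]) = x_i\,\omega_n(\mathbf{b}^i_{x_iy_i}) = \omega_n(\mathbf{b}^i_{y_ix_i}[1,m_i-1]),\]
which follows from a short parity case analysis using $\omega_n(\mathbf{b}^i_{y_ix_i}) = x_i\,\omega_n(\mathbf{b}^i_{x_iy_i})\,z_i$ and the fact that $z_i$ is an involution. Writing $C_i := \omega_n(\mathbf{b}^i_{x_iy_i})$, the evaluation then yields $\omega_n(\bs) = \omega_n(L)\,C_1\,x_2 C_2\cdots x_n C_n\,\omega_n(N)$ and $\omega_n(\sigma(\bs)) = \omega_n(L)\,x_1 C_1\,x_2 C_2\cdots x_n C_n\,z_n\,\omega_n(N)$. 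The two expressions agree on their common middle block and differ only by a prepended $x_1$ on the left of $C_1$ and an appended $z_n$ on the right of $C_n$. Conjugating $x_1$ by $\omega_n(L)$ and $z_n$ by $\omega_n(N)$, and rewriting the result in the form $\omega_n(\bs)\cdot r_1 r_2$, produces the required reflections $r_1, r_2\in R_n$.

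The main obstacle is the bookkeeping of overlapping edit regions, in particular confirming that the structural decompositions of $\bs$ and $\sigma(\bs)$ above hold without hidden interferences between non-adjacent braid-moves and without overlaps beyond a single shared position. An inductive argument on $n$, peeling off either $\sigma_1$ or $\sigma_n$ and applying the overlap condition to the remaining chain, seems to be the cleanest way to organize this calculation.
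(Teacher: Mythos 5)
Your proposal is correct and takes essentially the same route as the paper's proof: both rest on the observation that a braid-move $\mathbf{b}_{ij}\mapsto\mathbf{b}_{ji}$ amounts to deleting the first letter and appending one letter on the right, so that under the overlap hypothesis the insertion made by $\sigma_i$ is exactly the letter removed by $\sigma_{i+1}$, leaving one net deletion (for $\sigma_1$) and one net insertion (for $\sigma_n$), each realised in $W_n$ as multiplication by a reflection represented by a palindrome. Your write-up merely makes the bookkeeping explicit (the telescoping decompositions of $\bs$ and $\sigma(\bs)$ and the conjugations producing $r_1, r_2$) where the paper leaves it implicit.
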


\begin{proof}
Executing the braid move $\sigma_i$ is the same as omitting the first letter (letter on the left side) of $\mathbf{b}^i_{x_iy_i}$ and extending $\mathbf{b}^i_{x_iy_i}$ with a letter $s\in \{x_i,y_i\}$ on the right side. If the corresponding $m_{ij}$ is even, we have $\mathbf{b}_{ij}\mapsto s_i\cdot \mathbf{b}_{ij}\cdot s_i = \mathbf{b}_{ji}$. If it is odd, we have $\mathbf{b}_{ij}\mapsto s_i\cdot \mathbf{b}_{ij}\cdot  s_j = \mathbf{b}_{ji}$. Since  the last letter of $\mathbf{b}^i_{y_ix_i}$ is the first letter of $\mathbf{b}^{i+1}_{x_{i+1}y_{i+1}}$ for all $i\in \{1,\dots, n-1\}$, the letter that gets inserted by applying $\sigma_i$ is removed by applying $\sigma_{i+1}$ for all $i\in \{1,\dots, n-1\}$. Hence, executing $\sigma$ in total corresponds to omitting the letter for $\sigma_1$ and inserting the letter for $\sigma_n$. On a group-element-level in $W_n$, this is equivalent to multiplying with two reflections that are represented by palindromes.
\end{proof}

\begin{Rem}\label{Rem: braid moves interfering in one letter}
The lemma is also true if the first letter of $\mathbf{b}^i_{y_ix_i}$ is the last letter of $\mathbf{b}^{i+1}_{x_{i+1}y_{i+1}}$ for all $i\in \{1,\dots, n-1\}$. This condition reflects the analogous situation from the right to the left. Whereas in the presuppositions of the lemma, a letter is ``wandering'' from left to right from braid-move to braid-move.\par
Another situation to be considered is that the first and the last letter of the first braid-move are involved in following braid-moves in both directions. In this case, the braid-moves in one direction don't influence the ones in the other direction and can be executed first. So the lemma can be applied in one direction. Afterwards, the braid-moves in the other direction are considered. Analogously to the lemma, we obtain that executing the braid-moves in both directions is equivalent to multiplying with four reflections on a group-element-level in $W_n$.
\end{Rem}

\begin{Lem}
Let $(W,S)$ be a Coxeter system. Further, define $\mathbf{B}^{-2}_{ij}$ to be the alternating word consisting of the two letters $s_i,s_j\in S$ starting with $s_i$ and with word length $2m_{ij}-2$. 
Let $s_1,s_2, s_3\in S$ be distinct.  The concatenation of two distinct braid-moves on the word $\mathbf{B}^{-2}_{12}\mathbf{B}^{-2}_{31}$ :
\[
\mathbf{B}^{-2}_{12}\mathbf{B}^{-2}_{31}\mapsto  s_2s_3
\]
is  equivalent to multiplying with two reflections on a group-element-level in $W_n$.
\end{Lem}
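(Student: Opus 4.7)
The plan is to prove the lemma by a direct group-level computation in $W_n$: I will show that $\omega_n(s_2 s_3) \cdot \omega_n(\mathbf{B}^{-2}_{12}\mathbf{B}^{-2}_{31})^{-1}$ is a product of exactly two reflections in $W_n$. While one could attempt to imitate the step-by-step tracking of Lemma~\ref{Lem: braid moves interfering in one letter}, each of the two braid-moves contributes in principle two reflections and they act on disjoint subwords separated by the junction between $\mathbf{B}^{-2}_{12}$ and $\mathbf{B}^{-2}_{31}$, so the naive bookkeeping yields four reflections with no obvious pairwise cancellation. Comparing only the initial and final group elements avoids this difficulty and reduces the statement to an algebraic identity in $W_n$.

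To set up the identity, I first verify that two braid-moves do effect the claimed transformation: a braid-move on the prefix $\mathbf{b}_{12}$ of $\mathbf{B}^{-2}_{12}$ produces $\mathbf{b}_{21}$ followed by the length-$(m_{12}-2)$ tail of $\mathbf{B}^{-2}_{12}$, and a cascade of nil-moves collapses this to $s_2 s_1$. An analogous braid-move on the prefix $\mathbf{b}_{31}$ of $\mathbf{B}^{-2}_{31}$ collapses that block to $s_1 s_3$, and the concatenation $s_2 s_1 \cdot s_1 s_3$ reduces by one more nil-move to $s_2 s_3$. Since $W_n$ has no braid relations, the alternating words are already reduced in $W_n$, giving $\omega_n(\mathbf{B}^{-2}_{12}) = (s_1 s_2)^{m_{12}-1}$ and $\omega_n(\mathbf{B}^{-2}_{31}) = (s_3 s_1)^{m_{31}-1}$, so
\[
\omega_n(s_2 s_3) \cdot \omega_n(\mathbf{B}^{-2}_{12}\mathbf{B}^{-2}_{31})^{-1} = s_2 s_3 \cdot (s_1 s_3)^{m_{31}-1} \cdot (s_2 s_1)^{m_{12}-1}.
\]

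The key step is a clean regrouping of the right-hand side. Using the letter-for-letter identity $(s_2 s_1)^{m_{12}-1} = s_2 \cdot (s_1 s_2)^{m_{12}-2} s_1$ and setting $\rho_3 := s_3 (s_1 s_3)^{m_{31}-1}$ and $\rho_1 := (s_1 s_2)^{m_{12}-2} s_1$, the expression becomes $(s_2 \rho_3 s_2) \cdot \rho_1$. Both $\rho_3$ (odd length $2m_{31}-1$, starting and ending with $s_3$) and $\rho_1$ (odd length $2m_{12}-3$, starting and ending with $s_1$) are alternating palindromes, hence represent reflections in $W_n$; and $s_2 \rho_3 s_2$ is a conjugate of a reflection, hence itself a reflection. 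The ratio is therefore the product of two reflections, which is precisely the claimed equivalence at the group-element level. The main obstacle is conceptual rather than technical: one has to give up on matching individual braid-moves to pairs of reflections and instead read off the net effect directly, at which point the palindromic substructures of the alternating words make the factorization immediate. A concrete check in the first nontrivial case $m_{12}=m_{31}=3$, where the ratio has $S$-length $10$, confirms via Dyer's theorem that its reflection length is exactly $2$.
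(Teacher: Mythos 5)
Your proof is correct. It rests on the same key observation as the paper's own argument --- that odd-length alternating words over two letters are palindromes and hence represent reflections in $W_n$ --- but you execute it at the group level rather than the word level: you form the quotient $\omega_n(s_2s_3)\cdot\omega_n(\mathbf{B}^{-2}_{12}\mathbf{B}^{-2}_{31})^{-1}$ and factor it explicitly as $(s_2\rho_3 s_2)\cdot\rho_1$ with $\rho_1,\rho_3$ alternating palindromes, whereas the paper deletes two odd-length palindromic consecutive subwords directly from $\mathbf{B}^{-2}_{12}\mathbf{B}^{-2}_{31}$ (namely $\mathbf{B}^{-2}_{12}$ minus its last letter and $\mathbf{B}^{-2}_{31}$ minus its first letter), notes that each deletion costs one reflection, and observes that the remaining word is $s_2s_3$. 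Your version buys explicitness: the two reflections are written down, and you also verify --- which the paper leaves implicit --- that two braid-moves followed by a cascade of nil-moves really do collapse the word to $s_2s_3$. The paper's version is shorter and stays inside the deletion framework of Theorem~\ref{Thm: Dyer's Theorem} used throughout the section, which is what lets it feed directly into Theorem~\ref{Thm: Lower bound}. Two small remarks: the closing sanity check via Dyer's theorem in the case $m_{12}=m_{31}=3$ is redundant once the factorization into two reflections is exhibited, and the distinction between left- and right-multiplication by the pair of reflections is immaterial, since conjugating a reflection yields a reflection.
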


\begin{proof}
Since both alternating words $\mathbf{B}^{-2}_{12},\mathbf{B}^{-2}_{31}$ have even word length and consist exactly of two distinct letters, $\mathbf{B}^{-2}_{12}$ ends in $s_2$. Ignoring the first or the last letter   of them yields to palindromes of odd word length. Deleting these palindromes is equivalent on a group-element-level in $W_n$ to multiplying with a reflection for each palindrome. The word that is left is $s_2s_3$.
\end{proof}

\begin{Rem}
With this lemma and Theorem \ref{Thm: Lower bound} we can derive a sharp lower bound in the case of Example \ref{Example: Counter example to lower bound theorem}.
\end{Rem}

\begin{Not}\label{Not: bracket notation for elements}

Let $(W, S)$ be a Coxeter system. We write a word $\bs := u_1\cdots u_m\in S^*$ as $\bs = (\bs(1), \dots, \bs(m))$ with $u_i =\bs(i)$ to keep track of the initial position of a letter while applying nil-moves and braid-moves. In the process of applying nil-moves and braid-moves, the $i$-th position can be filled with the according letter $\bs(i)\in S$ or the empty word $e$. A subword $\bs(i_1)\cdots \bs(i_t)$ is called \emph{consecutive} if the entries in $(\bs(1), \dots, \bs(m))$ between the letters $\bs(i_a)$ and $\bs(i_{a+1})$ are all filled with $e$ for all $a\in \{1,\dots, n-1\}$. With this notation, nil-moves change two equal consecutive letters to the empty word. A braid-move on a consecutive subword $\tilde{\bs}$ that consists of two alternating letters $s_i, s_j\in S$ with length $m_{ij}$ permutes the two types of letters in $\tilde{\bs}$.
\end{Not}

\begin{Def}
Two braid-moves in a sequence of nil-moves and braid-moves on a word $\bs = (\bs(1),\dots,\bs(m))$ \emph{interfere} if there exists an $i \in \{1,\dots, m\}$ such that both braid-moves change the $i$-th entry in $(\bs(1),\dots,\bs(m))$. In this case, we say that the braid-moves are interfering in the index $i$.
\end{Def}

Using this vocabulary and notation, Theorem~\ref{Thm: Lower bound} can be strengthened with the following lemma. 

\begin{Lem}\label{Lem: Always exists sequence with braid moves not interferring more than in one index}
Let $(W,S)$ be a Coxeter system with $m_{ij}>2$ for all distinct $s_i,s_j\in S$. For every word $\bs = (\bs(1),\dots,\bs(m))$ over the alphabet $S^*$, there exists a finite sequence of nil-moves and braid-moves $(\alpha)_{l\in L}$ transforming the word into an $S$-reduced word such that all pairs of braid-moves in $(\alpha)_{l\in L}$ interfere maximally in one index.
\end{Lem}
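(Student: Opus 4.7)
The plan is to prove the slight strengthening that, under the hypothesis $m_{ij}>2$, any braid-minimalistic sequence already satisfies the conclusion that any two braid-moves in it share at most one original index. Combined with Lemma~\ref{Lem: There always exists a sequence of nil and braid moves that is MB}, this yields the lemma. I would argue by contradiction: assume a braid-minimalistic sequence $(\sigma_l)_{l\in L}$ contains a pair of braid-moves $\sigma_i, \sigma_j$ (with $i<j$) sharing at least two original indices, chosen so that $j-i$ is minimal. Let $p<q$ be two shared indices and denote by $P_i, P_j$ the respective position sets.

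The first step is to record two monotonicity properties. Positions $p$ and $q$ remain filled throughout the subsequence $\sigma_{i+1},\dots,\sigma_{j-1}$, because nil-moves only empty positions and never re-fill them; consequently, the letters at $p$ and $q$ change only through intermediate braid-moves touching those positions. Moreover $P_j \cap \{p+1,\dots,q-1\} \subseteq P_i \cap \{p+1,\dots,q-1\}$, because every position strictly between $p$ and $q$ that is filled at time $j$ was already filled at time $i$, and by the consecutiveness of $P_i$ it had to belong to $P_i$.

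The second step exploits the minimality of $j-i$: no intermediate braid-move may share two or more positions with $\sigma_i$, so in particular no intermediate braid-move can touch both $p$ and $q$. A case analysis on whether $p$ and $q$ are adjacent inside $P_i$ follows. In the adjacent case, the monotonicity pins down the letter pair of $\sigma_j$, and the rigidity of braid words of length $\ge 3$ forced by $m_{ij}>2$ leaves $P_j=P_i$ as the only possibility, directly contradicting property~(2) of braid-minimalism. In the non-adjacent case, every position of $P_i$ strictly between $p$ and $q$ persists into $P_j$, which produces shared indices for a shorter interval bounded by consecutive $P_i$-positions inside $(p,q)$; the argument then reduces inductively to the adjacent case.

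The main obstacle is the non-adjacent subcase, where I would need to organise the inductive reduction carefully while simultaneously tracking how the letters at $p$ and $q$ evolve through a potentially long chain of intermediate braid-moves that each touch only one of them. The hypothesis $m_{ij}>2$ is essential here: ruling out commutation relations and forcing every braid word to have at least three letters is exactly what rigidifies the letter patterns at $p$ and $q$ enough to force $P_j = P_i$ in the base case and to derive the required contradiction with property~(2) of braid-minimalism.
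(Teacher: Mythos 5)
Your central claim --- that every braid-minimalistic sequence automatically has the property that any two of its braid-moves share at most one original index --- is false, so the plan of deducing the lemma from Lemma~\ref{Lem: There always exists a sequence of nil and braid moves that is MB} cannot work. A concrete counterexample in $W^3_3$: take $\bs = (s_1,s_2,s_1,s_2,s_3,s_2,s_1,s_2,s_1)$, which admits no nil-move. Apply a braid-move to positions $\{4,5,6\}$ (producing $s_3s_2s_3$ there), then to positions $\{1,2,3\}$ (producing $s_2s_1s_2$), then to positions $\{3,4,5\}$ (which now carry $s_2s_3s_2$), and then the nil-move on positions $\{5,6\}$. Every braid-move here is performed while no consecutive $ss$ is present, and the three position sets are pairwise distinct, so both conditions of Definition~\ref{Def: (MB)} hold and the sequence extends to a braid-minimalistic one; yet the first and third braid-moves interfere in the two indices $4$ and $5$. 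This also refutes the decisive step of your adjacent case: $P_j=\{3,4,5\}$ is a shifted window meeting $P_i=\{4,5,6\}$ in exactly two positions, so $P_j=P_i$ is \emph{not} forced, and property~(2) of braid-minimalism is never triggered because it only forbids a second braid-move on the \emph{same} position set with the same letters. Note that in this example no intermediate braid-move even touches $p$ or $q$; the overlap is created by an intermediate braid-move changing a letter \emph{adjacent} to $P_i$. The further obstacles you flag yourself --- intermediate braid-moves that touch one of $p,q$ and may replace its letter by a generator outside the original pair, and the organisation of the non-adjacent case --- are additional, unresolved problems on top of this.

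The paper's proof is structured quite differently and avoids the issue: it does not try to certify a given sequence, but performs surgery on it. Arguing by induction on the number $c$ of non-empty positions, it observes that if some reducing sequence contains braid-moves $a'_x,a'_y$ interfering in at least two indices, then (using $m_{ij}>2$) both permute the same pair of generators, and because $a'_y$ must act on an alternating word after $a'_x$ has flipped the letters on the overlap, there are two adjacent positions carrying equal letters; one performs that nil-move \emph{first}, drops to a word with $c-2$ non-empty positions, and invokes the induction hypothesis to obtain a fresh sequence with the desired non-interference property. Since, as the example shows, some braid-minimalistic sequences genuinely do contain interfering braid-moves, any correct proof must construct a \emph{different} sequence in those cases, which is exactly what your proposal is missing.
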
 
\begin{proof}
Let $\bs = (\bs(1),\dots,\bs(m))$ be a word over the alphabet $S^*$. From the solution of the word problem for Coxeter groups, we know that there exists a sequence $(a'_k)_{k\in K}$ that transforms $\bs = (\bs(1),\dots,\bs(m))$ into a $S$-reduced word with some $\mathbf{s}(i) = e$ if $\bs$ isn't $S$-reduced (see Theorem~\ref{Thm: solution word problem}). If $\bs$ is already $S$-reduced, the sequence is empty. Since there is no braid-move necessary to reduce a word of word length $2$, the statement of the lemma holds if only one or two indices are different from $e$. Define $c:=|\{i\in \{1,\dots,m\}\mid \bs(i)\neq e\}|$ to be the number of indices in $\bs$ that are not filled with the empty word. From here, we prove the lemma by induction over $c$.\par
Let $\bs = (\bs(1),\dots,\bs(m))$ be a word with $|\{i\in \{1,\dots,m\}\mid \bs(i)\neq e\}| = c+1$. If $\bs$ is $S$-reduced, there exists the empty sequence as a sequence that does not contain pairs of braid-moves that interfere in more than one index. Else, there exists a finite sequence $(a'_k)_{k\in K}$ of nil-moves and braid-moves that transforms $\bs$ into a $S$-reduced word. Assume that $(a'_k)_{k\in K}$ contains two braid-moves $a'_x$ and $a'_y$ with $x<y$ such that $a'_y$ interferes with $a'_x$ in at least two indices. Further, we may assume that no nil-moves are appearing in the sequence $(a'_k)_{k\in K}$ before $a_x$, because in this case a nil-move lowers $c+1$ by $-2$ and following the induction assumption $\bs$ can be transformed into a $S$-reduced word by a sequence of nil-moves and braid-moves without braid-moves that interfere in more than one index.\par
Braid-moves are executed on consecutive subwords of $\bs = (\bs(1),\dots,\bs(m))$. So $a'_y$ interferes with $a'_x$ in at least two adjacent indices that are both not $e$. This implies that $a'_x$ as well as $a'_y$ are braid-moves 
permuting the same two generators $s_i,s_j\in S$, because we have $m_{ij}>2$ for all distinct $s_i,s_j\in S$. We can assume that $a'_x$ and $a'_y$ are interfering in less than $m_{ij}$ indices because in this case $a'_y$ would just reverse $a'_x$ and both braid-moves can be omitted to obtain a reduced word. Thus, we additionally may assume that there exists a largest non-empty index $i_e$ touched by $a'_x$ and $a'_y$ that is adjacent to a larger index $i_f$ solely touched by $a'_y$ (all other cases are symmetric). Since $a'_y$ is executed after $a'_x$ in $(a'_k)_{k\in K}$, it follows $\bs(i_e) = \bs(i_f)\in \{s_i,s_j\}$. This allows us to define a new sequence $(a_l)_{l\in L}$ of nil-moves and braid-moves. The fist move $a_1$ is a nil-move that substitutes both adjacent entries $\bs(i_e)$ and  $\bs(i_f)$ with $e$. The resulting word $\bs'$ has $c-1$ non-empty indices. The rest of the sequence $(a_l)_{l\in L}$ is a sequence that reduces $\bs'$ such that all pairs of braid-moves interfere maximally in one index, which exists by the induction assumption. This completes the proof. 
\end{proof}

By using the lemmas above, we can derive a lower bound for the reflection length function from Theorem~\ref{Thm: Lower bound} for some elements. This is without knowing a deletion set, which is stronger than knowing the reflection length.\par
Therefore, consider a specific $S$-reduced word $\bs$ for an element $w$ in a Coxeter group $W$ and determine the maximal number of non-interfering braid-moves possible after omitting some letters in $\bs$. The lower bounds obtained this way aren't sharp generally. For Coxeter groups with sufficiently large braid relations, it implies that the powers of a Coxeter element have unbounded reflection length. This leads to the proof of Theorem~\ref{Thm: unbounded reflection length for powers of Coxeter elements}.
 
\begin{proof}[Proof of Theorem~\ref{Thm: unbounded reflection length for powers of Coxeter elements}]
Assume $k>2$.
For a natural number $\lambda\in \NN$, Theorem~\ref{Thm: solution word problem} implies that $\bs=(s_1\cdots s_n)^\lambda$ is the unique $S$-reduced word representing the element $w^\lambda$. Let $\bs'$ be the word obtained from $(s_1\cdots s_n)^\lambda$ by removing all letters in a deletion set $D(\bs)$. There exists a finite sequence $(\alpha_i)_{i\in I}$ of nil-moves and braid-moves on $\bs'$ to transform $\bs'$ into the empty word (see Theorem~\ref{Thm: solution word problem}). According to Lemma \ref{Lem: Always exists sequence with braid moves not interferring more than in one index}, we may assume additionally that all pairs of braid-moves in $(\alpha_i)_{i\in I}$ interfere maximally in one index of $\bs = (\bs(1), \dots, \bs(\lambda\cdot n))$.\par
We don't know a concrete deletion set for $\bs$ nor a sequence of nil-moves and braid-moves to reduce $\bs'$. Hence, it is necessary to consider the lowest lower bound obtainable with Theorem~\ref{Thm: Lower bound} for the reflection length of $w^\lambda$. A sequence of interfering braid-moves is counted as maximally four braid-moves for the lower bound of Theorem \ref{Thm: Lower bound} (see Lemma \ref{Lem: braid moves interfering in one letter} and Remark~\ref{Rem: braid moves interfering in one letter}). Therefore, we consider the maximal number of possible non-interfering braid-moves on $\bs$. Let $\xi(\lambda)$ be the maximal number of non-interfering braid-moves possible on $(s_1\cdots s_n)^\lambda$. Hence, a lower bound for the reflection length of $w$ is 
\begin{equation}\label{Eq: lower bound}
l_{R_n}(\omega_n((s_1\cdots s_n)^\lambda))-2 \xi(\lambda).
\end{equation} 
By counting letters, the minimal $S$-length  of a minimal consecutive subword in $(s_1\cdots s_n)^\lambda$ containing $\mathbf{b}_{ij}$ in dependency on $m_{ij}$ is
\[
\chi(m_{ij}) := 
	\begin{cases}
%	(n+1)+\frac{(m_{ij}-3)n}{2} =
	\frac{m_{ij}-1}{2}\cdot n+1  & \text{for odd}\; m_{ij}\\
%	(n+2)+\frac{(m_{ij}-4)n}{2} =
\frac{m_{ij}}{2}\cdot n-(n-2) & \text{for even}\; m_{ij}
	\end{cases}
\]
for all distinct $s_i, s_j\in S$. The minimal word length of a minimal consecutive subword in $\bs$ containing $\mathbf{b}_{ij}$ as a subword is obtained for $j=i+1$ (there are other minimal cases obtained by conjugacy). \par
The word length of $w^\lambda$ is $\lambda\cdot n$. Assume $k = m_{ij}$ with $j= i+1$. Thus, we have
\[
\xi(\lambda) = \frac{\lambda\cdot n}{\chi(m_{ij})}.
\]
Inserting this in Equation~\ref{Eq: lower bound} together with Lemma~\ref{Lem: l_R in univ. group, powers of Cox element} yields the following lower bound for the reflection length of $w^\lambda$:
\begin{align*}
 & (\lambda-1)\cdot(n-2)+n -2\cdot \frac{\lambda\cdot n}{\chi(m_{ij})}\leq l_R(w^\lambda) \\
\Leftrightarrow\quad  & \lambda n \left(1- \frac{2}{\chi(m_{ij})}\right)-2\lambda +2 \leq l_R(w^\lambda).
\end{align*} 
The left term is an unbounded monotonous growing function in the variable $\lambda$ if and only if
\[
\left(1-\frac{2}{\chi(m_{ij})}\right)> \frac{2}{n}.
\]
By inserting $\chi(m_{ij})$, we obtain that this is equivalent to the condition 
\begin{equation}\label{equation: for even mij}
%n(n-2)-\frac{8}{m_{ij}-2}>0 
\left(\frac{m_{ij}}{2}\cdot n-(n-2)\right)\cdot(n-2)-2n>0
\end{equation}
for even $m_{ij}$.
For odd $m_{ij}$, it is equivalent to the condition
\begin{equation}\label{equation: for odd mij}
%n(n-2)-\frac{n^2+4}{m_{ij}}>0. 
\left(\frac{m_{ij}-1}{2}\cdot n+1\right)\cdot(n-2)-2n>0.
\end{equation}
This implies that the sequence $(l_R(w^\lambda))_{\lambda\in \NN}$ diverges towards $\infty$  if the inequality~\ref{equation: for even mij} is fulfilled for even $k$ or if the inequality~\ref{equation: for odd mij} is fulfilled for odd $k$. Accordingly, the sequence of powers of a Coxeter element in a Coxeter group of rank $3$ has unbounded reflection length if $k\geq 5$. This is to be seen by inserting the corresponding values in the inequality. For higher rank Coxeter groups, the reflection length of the powers of a Coxeter element is unbounded for $k\geq 3$. This proves the lemma.
\end{proof}

\begin{Rem}
The theorem above is independent from Duszenko's Theorem~\ref{Thm: Duszenko LR unbpunded}. So it can be seen as a constructive proof of the unboundedness of the reflection length function on Coxeter groups whose braid relations are large enough.\par
The theorem above does not cover all \ina Coxeter groups. It is reasonable to conjecture that the same statement also applies to the exceptions since we know that the reflection length is also an unbounded function on all \ina Coxeter groups (see Theorem~\ref{Thm: Duszenko LR unbpunded}). With Theorem~\ref{Thm: Dyer's Theorem}, we implemented an algorithm to compute the reflection length. This  conjecture is supported by all our calculations of the reflection length for $n=3$ and small $\lambda$ in the exceptional cases.
\end{Rem}

\begin{example}
The Coxeter group $W$ defined by the Coxeter graph 
\[
\begin{tikzpicture}
\small
 \tikzset{enclosed/.style={draw, circle, inner sep=0pt, minimum size=.1cm, fill=black}}

      \node[enclosed, label={below,yshift=-0.0cm: $s_1$}] (A) at (0,0) {};
      \node[enclosed, label={below: $s_2$}] (B) at (1,0) {};
      \node[enclosed, label={above, yshift=0cm: $s_3$}] (C) at (0.5,0.7) {};

      \draw (A) -- (B) node[midway, below] (edge1) {$4$};
      \draw (C) -- (B) node[midway, right, ,yshift=+0.1cm] (edge2) {$3$};
      \draw (C) -- (A) node[midway, left, ,yshift=+0.1cm] (edge3) {$3$};
\end{tikzpicture}.
\]
is the Coxeter group of rank $3$ with the smallest braid relations and no commuting generators up to isomorphism. Table~\ref{Table: L_R in (3,3,4)} shows the reflection length of $\omega(s_1s_2s_3)^\lambda$ as a function of $\lambda$. For $\lambda \leq 2$, the reflection length of $\omega(s_1s_2s_3)^\lambda$ behaves in $W$ like $l_{R_n}(\omega_n(s_1s_2s_3)^\lambda)$ according to Lemma~\ref{Lem: r_R = univ. CoxGrp.}. For higher values of $\lambda$, it behaves differently. This is to be seen by comparing the values in Table~\ref{Table: L_R in (3,3,4)} with the values of the formula in Lemma~\ref{Lem: l_R in univ. group, powers of Cox element} for $\lambda\geq 3$. 

\end{example}

If $m_{ij}=2$ appears as a minimal braid relation however, the statement of Theorem~\ref{Thm: unbounded reflection length for powers of Coxeter elements} is false for rank $3$ Coxeter groups and and presumably more complicated for higher ranks. This is illustrated by the next lemma and its proof.

{
\renewcommand{\arraystretch}{1.5}
\setlength{\tabcolsep}{7pt}
\begin{table}
\centering
\begin{tabular}{c|c|c|c|c|c|c|c|c|c|c|c|c} 

 $\lambda$ & 2 & 3 & 4 & 5 & 6 & 7 & 8 & 9 & 10 & 11 & 12 & 15 \\ [0.4ex] 
 \hline
 $l_R(\omega(s_1s_2s_3)^\lambda)$ & 4 & 3 & 4 & 5 & 4 & 5 & 4 & 5 & 6 & 5 & 6 & 7 \\ [0.4ex] 
\end{tabular}
\vspace*{5pt}

\caption{Reflection length of powers of a Coxeter element.}
\label{Table: L_R in (3,3,4)}
\end{table}}

\begin{Lem}
Let $(W, S = \{s_1,s_2,s_3\})$ be a Coxeter system with two distinct commuting generators. For $\lambda\in \NN$, the powers of the Coxeter element $(s_1s_2s_3)^\lambda\in W$ have the following reflection length:
\[
l_R((s_1s_2s_3)^\lambda) =
	\begin{cases}
	2 & \text{for even}\;\lambda\\
	3\;\text{or}\; 1 & \text{for odd}\;\lambda.
	\end{cases}
\] 
\end{Lem}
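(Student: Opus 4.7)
The plan is to exploit the commutativity of two generators to push $c^\lambda := (s_1 s_2 s_3)^\lambda$ into a dihedral reflection subgroup of $W$, and then to combine the standard structure of reflection length in dihedral groups with a parity argument. Since reflection length is a class function (Lemma~\ref{Lem: basic properties}(ii)), cyclically conjugating and relabelling allows me to assume without loss of generality that the commuting pair is $s_1, s_3$, i.e.\ $m_{13} = 2$.

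First I would introduce the reflections $u := s_1 s_2 s_1$ and $v := s_3 s_2 s_3$, together with the involution $t := s_1 s_3$ (for which $l_R(t) = 2$ by additivity on the direct product $\langle s_1 \rangle \times \langle s_3 \rangle$). Using $s_1 s_3 = s_3 s_1$ throughout, one checks directly the three key identities
\[
c = ut = tv, \qquad c^2 = uv, \qquad vt = s_3 s_2 s_3 s_1 s_3 = s_3 s_2 s_1 = c^{-1},
\]
which drive everything below.

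For even exponents $\lambda = 2k$, the identity $c^2 = uv$ gives $c^{2k} = (uv)^k$, an element of the dihedral reflection subgroup $\langle u, v \rangle \leq W$. In this dihedral subgroup every non-identity rotation factors as a product of two reflections of the subgroup; explicitly $(uv)^k = u \cdot \bigl(v (uv)^{k-1}\bigr)$, where the second factor is an odd-length alternating palindrome in $u, v$ and hence a conjugate of $u$ (or $v$), i.e.\ a reflection in $R$. Therefore $l_R(c^{2k}) \leq 2$. For odd exponents $\lambda = 2k+1$, the factorization $c = tv$ combined with $vt = c^{-1}$ yields
\[
c^{2k+1} = (tv)^{2k+1} = t\,(vt)^{2k}\,v = t \cdot c^{-2k} \cdot v = t \cdot \bigl(c^{-2k} v\bigr).
\]
The element $c^{-2k} v$ is a rotation times a reflection in $\langle u, v \rangle$, hence itself a reflection $r \in R$, and the triangle inequality (Lemma~\ref{Lem: basic properties}(iii)) gives $l_R(c^{2k+1}) \leq l_R(t) + l_R(r) \leq 2 + 1 = 3$.

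Matching lower bounds come from parity: by Theorem~\ref{Thm: Speyer's Theorem}, in the irreducible infinite case the word $(s_1 s_2 s_3)^\lambda$ is $S$-reduced, so $l_S(c^\lambda) = 3\lambda \equiv \lambda \pmod 2$, and Lemma~\ref{Lem: basic properties}(v) transfers this parity to $l_R(c^\lambda)$. Combined with the upper bounds this forces $l_R(c^{2k}) = 2$ (when $c^{2k} \neq \id$) and $l_R(c^{2k+1}) \in \{1,3\}$. I expect the main obstacle to be the odd case: without the factorization $c^{2k+1} = t \cdot c^{-2k} \cdot v$ and the observation that $c^{-2k} v$ collapses to a single reflection in $\langle u, v \rangle$, the crude estimate $l_R(c^{2k+1}) \leq l_R(c^{2k}) + l_R(c) \leq 2 + 3 = 5$ overshoots the target by two.
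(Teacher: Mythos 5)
Your argument is correct and reaches the same bounds as the paper, but by a cleaner, more algebraic route. The paper works directly on the word $(s_1s_2s_3)^\lambda$: it splits it in the middle, applies the braid-move $s_3s_1\mapsto s_1s_3$ to the right half, and for even $\lambda$ reads off a palindrome-times-$s_3s_2s_3$ factorisation (a product of two reflections), while for odd $\lambda$ it falls back on Theorem~\ref{Thm: Dyer's Theorem}, deleting two letters from the middle block $s_1s_2s_3s_1s_2s_1$ to get the bound $3$. You instead package the same commutation into the identities $c=ut=tv$, $c^2=uv$, $vt=c^{-1}$ with $u=s_1s_2s_1$, $v=s_3s_2s_3$, $t=s_1s_3$, so that $c^{2k}$ lands in the dihedral reflection subgroup $\langle u,v\rangle$ and $c^{2k+1}=t\cdot(c^{-2k}v)$ is an explicit product of $s_1$, $s_3$ and one further reflection; this avoids Dyer's theorem entirely and makes the odd case a genuine reflection factorisation of length $3$ rather than a deletion argument. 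The identities check out ($(vu)^k v$ and $v(uv)^{k-1}$ are odd alternating words in the two reflections $u,v$, hence conjugates of $u$ or $v$ and so elements of $R$), and your parity lower bound matches the paper's. Two small points: the appeal to ``additivity on direct products'' for $l_R(s_1s_3)=2$ is not quite the right citation (that remark concerns $W$ itself being a product), but the fact is immediate from parity and $s_1s_3\neq\id$ --- and only $l_R(t)\le 2$ is needed for the upper bound; and, exactly as in the paper's own proof, the parity/non-triviality step silently requires $W$ to be infinite and irreducible so that Theorem~\ref{Thm: Speyer's Theorem} applies (you at least flag this hypothesis, which the statement of the lemma does not).
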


\begin{proof}
Since two generators commute, there are distinct $1\leq i,j\leq 3$ with $m_{ij} =2$. Reflection length is invariant under conjugation (see Lemma~\ref{Lem: basic properties}). Exchanging the two generators that commute in $(s_1s_2s_3)^\lambda$ does not change the element, since they are adjacent and commute. This is why, we may assume that $m_{13} =2$ without loss of generality. \par
Take the following subdivision 
\[
(s_1s_2s_3)^\lambda = (s_1s_2s_3)^m \cdot (s_1s_2s_3)^{\lambda-m},
\]
where $m= \frac{\lambda}{2} -1$ if $\lambda$ is even and $m=\frac{\lambda-1}{2}$ if $\lambda$ is odd.
 Next, we apply the braid-move $s_3s_1\mapsto s_1s_3$ to all consecutive subwords $s_3s_1$ in the word $(s_1s_2s_3)^{\lambda-m}$. Braid-moves don't change the element represented by the words.
 When $\lambda$ is even, we obtain the reflection factorisation 
\begin{equation}
 (s_1s_2s_3)^m \cdot s_1s_2s_1\cdot (s_3s_2s_1)^{\lambda-m-2}\cdot s_3s_2s_3.
\end{equation}
In this case, it is $\lambda-m-2 = m$. When $\lambda$ is odd, we obtain the factorisation 
\begin{equation}
(s_1s_2s_3)^{m-1}\cdot s_1s_2s_3s_1s_2s_1\cdot (s_3s_2s_1)^{\lambda-m-2}\cdot s_3s_2s_3.
\end{equation}
In the latter case, it is $\lambda-m-2 = m - 1$.\par
From both factorisations, we can read off an upper bound for the reflection length of $(s_1s_2s_3)^\lambda$. The first factorisation is a reflection factorisation and the reflection length has the same parity as the word length (see Lemma~\ref{Lem: basic properties}). The word does not represent the identity, because it is reduced (see Theorem~\ref{Thm: Speyer's Theorem}). Hence, for even $\lambda$ it is $l_R( (s_1s_2s_3)^\lambda)= 2$.\newline
The second factorisation is not a reflection factorisation. Here, Theorem~\ref{Thm: Dyer's Theorem} gives us $l_R((s_1s_2s_3)^\lambda)\leq 3$ by removing the two letters in the middle of the word $s_1s_2s_3s_1s_2s_1$.
\end{proof}

\section{Upper bounds for reflection length in \sib Coxeter groups}
In this section, we study upper bounds of the reflection length of powers of Coxeter elements in \sib Coxeter groups. Therefore, we distinguish between rank $3$ and higher-rank Coxeter groups, because the reflection length of powers of elements with word length $3$ behaves differently from the reflection length in higher-rank Coxeter groups.\par

Lemma \ref{Lem: r_R = univ. CoxGrp.} is a first hint that not all braid-moves possible on a word are influencing the reflection length. Since braid-moves don't change the word length, every letter must be deleted by a nil-move. For every consecutive subword of a power of a Coxeter element $c^\lambda$ on which a braid-move was applied, letters to cancel this subword are to be found already in the right order in a reduced expression for $c^\lambda$. This is why, counting half of all possible braid-moves on a reduced expression of a power of a Coxeter element, is intuitive for estimating an upper bound for its reflection length. This section shows that this is indeed leading to a sharp upper bound for the reflection length.\par

Let $(W^n_k,S)$ be a \sib Coxeter system of rank $n\geq3$ with a constant edge labelling $m=k\in \NN_{\geq 4}$.   
Theorem~\ref{Thm: unbounded reflection length for powers of Coxeter elements} covers all \sib \ina Coxeter groups with one exception. This follows directly from the classification of Euclidean reflection groups.
\begin{Cor}
The powers of every Coxeter element in a \sib \ina Coxeter system $(W, S)$ have unbounded reflection length if $(W,S)$ is not the \sib Coxeter system  of rank $3$ with $m_{ij} = 4$. 
\end{Cor}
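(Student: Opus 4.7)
The plan is to prove the corollary by a direct case analysis on the two parameters $n$ and $k$ of a single braided Coxeter system $W^n_k$, combining the classification of spherical and Euclidean irreducible Coxeter groups with the hypothesis of Theorem~\ref{Thm: unbounded reflection length for powers of Coxeter elements}. Concretely, I want to show that every \sib \ina Coxeter system other than $W^3_4$ satisfies either case (i) or case (ii) of Theorem~\ref{Thm: unbounded reflection length for powers of Coxeter elements}, and then conclude directly.

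First, I rule out the cases that are not \ina. For $k = 2$, all generators pairwise commute, so $W^n_2 \cong (\mathbb{Z}/2\mathbb{Z})^n$ is a finite spherical reflection group, hence not \ina. For $n = 2$, $W^2_k \cong I_2(k)$ is dihedral and hence spherical, so again not \ina. For $n = 3$ with $k = 3$, the Coxeter graph is the triangle with all labels $3$, which is the affine Coxeter diagram $\tilde A_2$, a Euclidean reflection group; hence $W^3_3$ is not \ina either. This list exhausts the non-\ina single braided Coxeter systems, as I will justify by invoking the Coxeter–Dynkin classification.

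Second, I verify that every remaining single braided system is \ina and falls into the hypothesis of Theorem~\ref{Thm: unbounded reflection length for powers of Coxeter elements}. Here the key input is the classification: the Coxeter graphs of finite or Euclidean irreducible reflection groups of rank $\geq 3$ are trees or cycles, and the only complete graph on $\geq 3$ vertices appearing in that list is $\tilde A_2$. Consequently $W^n_k$ is \ina whenever $n = 3$ with $k \geq 4$ or $n \geq 4$ with $k \geq 3$. Among these, the cases $n = 3$ with $k \geq 5$ satisfy hypothesis (i) of the theorem, and the cases $n \geq 4$ with $k \geq 3$ satisfy hypothesis (ii); the unique residual case is $n = 3$ with $k = 4$, which is precisely the exception excluded in the statement of the corollary. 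Applying Theorem~\ref{Thm: unbounded reflection length for powers of Coxeter elements} to the covered cases yields $\lim_{\lambda \to \infty} l_R(w^\lambda) = \infty$ for every Coxeter element $w$.

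The argument is essentially bookkeeping; there is no real obstacle beyond carefully invoking the classification of affine and finite irreducible Coxeter groups to conclude that complete graphs with constant label $k \geq 3$ on $n \geq 3$ vertices do not appear as the Coxeter graph of any spherical or Euclidean reflection group apart from $\tilde A_2$. Once this classification step is made explicit, the corollary is immediate from Theorem~\ref{Thm: unbounded reflection length for powers of Coxeter elements}.
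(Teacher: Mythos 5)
Your argument is correct and is essentially the paper's own (the paper simply asserts that the corollary "follows directly from the classification of Euclidean reflection groups" and that Theorem~\ref{Thm: unbounded reflection length for powers of Coxeter elements} covers all single braided infinite non-affine cases except $W^3_4$). You have merely made the routine classification bookkeeping explicit, correctly identifying $k=2$, $n=2$, and $W^3_3\cong\tilde A_2$ as the only non-\ina single braided systems and $W^3_4$ as the only \ina one falling outside the theorem's hypotheses.
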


Moreover, we can directly extract a lower bound for the reflection length of the powers of a Coxeter element from the proof of Theorem~\ref{Thm: unbounded reflection length for powers of Coxeter elements}.  Sharp upper bounds are given by the following results.

\begin{Thm}\label{Thm: L_R for n=3}
The reflection length of elements of the form $(s_1s_2s_3)^\lambda s_1\cdots s_r$ in a \sib Coxeter system $(W^n_k,S)$ with $1\leq r  \leq 3$ and $k\geq 3,\;\;\lambda\in \mathbb{N}_0$ is bounded from above by
\[
l_R(\omega((s_1s_2s_3)^\lambda s_1\cdots s_r))\leq \lambda+r-2\cdot \floor*{\frac{\lambda+\id_{r\geq 2}}{k}}.
\]
\end{Thm}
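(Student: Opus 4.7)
I would proceed by induction on $\lambda$. The base case, $\lambda + \id_{r\geq 2} < k$, is handled directly by Corollary~\ref{Cor: l_R= l_Rn in our Situation}: it yields the exact value $l_R((s_1s_2s_3)^\lambda s_1\cdots s_r) = \lambda + r$, which matches the bound since the floor term vanishes.

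For the inductive step $\lambda + \id_{r\geq 2} \geq k$, I plan to invoke Theorem~\ref{Thm: Dyer's Theorem} and reduce the problem to exhibiting an explicit deletion set of size $\lambda + r - 2q$ (with $q := \floor{(\lambda+\id_{r\geq 2})/k}$) in the reduced word $(s_1s_2s_3)^\lambda s_1\cdots s_r$, which is $S$-reduced by Theorem~\ref{Thm: Speyer's Theorem}, such that the kept subsequence represents the identity in $W^3_k$. The construction exploits the braid identity $(s_2s_3)^k = \id$: by deleting a prescribed subset of $s_1$'s while keeping all $s_2, s_3$ letters, the kept subsequence takes the alternating form $s_1 (s_2s_3)^{n_1} s_1 (s_2s_3)^{n_2} s_1 \cdots s_1 (s_2s_3)^{n_m} s_1$. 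Choosing the exponents $n_i$ so that $q$ of them equal $k$ forces those blocks to collapse to the identity, while the surrounding $s_1$'s then cancel in pairs via $s_1 s_1 = \id$.

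As a concrete template for $\lambda = qk$, $r = 1$: keeping $2q$ of the $s_1$'s with gap pattern $(1,k,k-1)$ handles $q=2$, and nested extensions $(1,k,1,k,k-2)$, $(1,k,1,k,1,k,k-3),\ldots$ handle larger $q$, each yielding a kept subsequence that iteratively collapses to $\id$ by alternating applications of $(s_2s_3)^k=\id$ and $s_1 s_1=\id$. The deletion count of $q(k-2)+1 = \lambda + 1 - 2q$ then matches the bound exactly.

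The main obstacle will be the uniform handling of all parameter regimes. For $\lambda$ not a multiple of $k$, the remainder must be absorbed into the tail block of the nested pattern; for $r\in\{2,3\}$, the trailing letters $s_2$ and $s_3$ modify the kept subsequence at the end, requiring the gap pattern near the tail to be adjusted according to the activation of $\id_{r\geq 2}$ in the floor. Moreover, for small $k$ (particularly $k=3$), the naive nested pattern collides with the constraint $n_i \geq 1$ and may require invoking the companion braid identities $(s_1s_2)^k=\id$ or $(s_1s_3)^k=\id$ in certain collapsing steps. Designing a clean recursive construction of the deletion pattern parametrized by $(q, \lambda \bmod k, r)$ and verifying that the kept subsequence collapses to the identity in each of the resulting sub-cases is the combinatorial core of the proof; the induction should carry not only the size of the deletion set but also a structural invariant (such as the endpoint of the last kept $s_1$) sufficient to extend the pattern from $q$ to $q+1$.
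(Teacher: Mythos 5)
Your base case is fine, and your overall strategy --- exhibit an explicit deletion set and invoke Theorem~\ref{Thm: Dyer's Theorem} via Theorem~\ref{Thm: Speyer's Theorem} --- is legitimate; the nested pattern you describe for $\lambda=qk$, $r=1$ does collapse to the identity and has the right cardinality $\lambda+1-2q$. But the construction as proposed has a genuine gap exactly where you flag ``the main obstacle''. If you delete only letters $s_1$ and keep all $\lambda$ blocks $s_2s_3$, then every collapse step in your scheme removes either two letters $s_1$ or exactly $k$ blocks $s_2s_3$ (via $(s_2s_3)^k=\id$), so the kept word can only reduce to the identity when $k$ divides $\lambda$. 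When $\lambda=qk+\rho$ with $0<\rho<k$, ``absorbing the remainder into the tail block'' leaves you, after all nested cancellations, with a word conjugate to $s_1(s_2s_3)^{\rho}s_1\neq\id$; no adjustment of the gap pattern alone can repair this, so the deletion set must also contain letters $s_2,s_3$, and that part of the construction --- as well as the cases $r=2,3$, where the $\id_{r\geq 2}$ shift in the floor changes which residues are problematic --- is not designed. This is the combinatorial core you defer, and it is not a routine verification.

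The paper's proof avoids the global pattern entirely, and I would recommend restructuring your argument along its lines. The real work is the case $r=2$: cyclically conjugate $(s_1s_2s_3)^{\lambda+1}s_1s_2$ to $s_1s_2(s_1s_2s_3)^{k-2}s_1s_2\cdot(s_3s_1s_2)^{\lambda+1-(k-1)}s_3$, observe that the first factor has reflection length at most $k-2$ (delete its $k-2$ letters $s_3$ to leave $(s_1s_2)^k=\id$ --- your idea, but applied locally to a single period), bound the second factor by the induction hypothesis after permuting generators (Lemma~\ref{Lem: basic properties}), and add the two bounds using subadditivity $l_R(gh)\leq l_R(g)+l_R(h)$. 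The cases $r=1$ and $r=3$ then follow from cyclic conjugation and from $l_R(wr)\leq l_R(w)+1$, and the floor-function bookkeeping closes because each induction step lowers $\lambda$ by $k-1$. Peeling off one period per step and letting subadditivity do the accounting replaces the structural invariant your global pattern would otherwise have to carry.
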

\begin{proof}
We show the inequality by induction over $\lambda$.
We assume $k$ to be at least $3$. Thus, for $\lambda\leq 1$ we have  $\floor*{\frac{\lambda+\id_{r\geq 2}}{k}}=0$ and $(s_1s_2s_3)^\lambda s_1\cdots s_r$ has no subword of the form $(s_is_j)^3$. By Lemma \ref{Lem: r_R = univ. CoxGrp.} the reflection length $l_R(\omega((s_1s_2s_3)^\lambda s_1\cdots s_r))$ is equal to $l_{R_n}(\omega_n((s_1s_2s_3)^\lambda s_1\cdots s_r))$ for $\lambda\leq 1$ and Lemma \ref{Lem: l_R in univ. group, powers of Cox element} implies
\[
l_R(\omega((s_1s_2s_3)^\lambda s_1\cdots s_r))= \lambda+r.
\]
Hence, the statement of the lemma is true for $\lambda\leq 1$.\par
Now, consider $(s_1s_2s_3)^{\lambda+1}s_1$. Reflection length is invariant under conjugation and invariant under permutation of generators, since we only consider \sib Coxeter groups (see Lemma~\ref{Lem: basic properties}). Together with the induction hypothesis, this implies
\[
l_R(\omega((s_1s_2s_3)^{\lambda+1}s_1))= l_R(\omega((s_2s_3s_1)^\lambda s_2s_3))= \lambda+2-2\cdot \floor*{\frac{\lambda+1}{k}}.
\]
For the word $(s_1s_2s_3)^{\lambda+1}s_1s_2$, we obtain by conjugation, Lemma~\ref{Lem: basic properties} and by the induction hypothesis
\begin{align*}
l_R(\omega((s_1s_2s_3)^{\lambda+1}s_1s_2)) &= l_R(\omega(s_1s_2(s_1s_2s_3)^{k-2}s_1s_2 (s_3s_1s_2)^{\lambda+1-(k-1)}s_3))\\
 &\leq l_R(\omega(s_1s_2(s_1s_2s_3)^{k-2}s_1s_2))+l_R(\omega((s_3s_1s_2)^{\lambda+1-(k-1)}s_3))\\
 &= (k-2)+\lambda+1-(k-1)+1-2\floor*{\frac{\lambda+1-(k-1)}{k}}\\
 &= (\lambda+1)-2\floor*{\frac{(\lambda+1)+1-k}{k}}\\
 &= (\lambda+1)+2 -2\floor*{\frac{(\lambda+1)+1}{k}}.
\end{align*}
So the lemma holds in this case.\par 
For the reflection length of elements of the form  $(s_1s_2s_3)^{\lambda+1}s_1s_2s_3$ we have
\begin{align*}
l_R(\omega((s_1s_2s_3)^{\lambda+1}s_1s_2s_3)) &\leq l_R(\omega((s_1s_2s_3)^{\lambda+1}s_1s_2)) +1\\
 &= (\lambda+1)+2-2\floor*{\frac{(\lambda+1)+1}{k}}+1
\end{align*}
according to the induction hypothesis and Lemma~\ref{Lem: basic properties}. This completes the proof of the lemma. 
\end{proof}

We state the following theorem analogously to Theorem \ref{Thm: L_R for n=3} \sib Coxeter groups of rank for $n\geq 4$.

\begin{Thm}\label{Thm: L_R for n>=4}
The reflection length of the element represented by the word $\bs= (s_1s_2\cdots s_n)^\lambda s_1\cdots s_r$ in a \sib Coxeter system $(W,S)$ with $1\leq r  \leq n$ and $\lambda\in \mathbb{N}_0$ as well as $n\geq 4$ is bounded from above by
\[
l_R(\omega(\bs))\leq \lambda(n-2)+r-2\cdot\id_{(\lambda+\id_{r\geq 2})\geq k}\cdot \left(1+ \floor*{\frac{\lambda-k+\id_{r\geq 2}}{k-1}}\right).
\]
\end{Thm}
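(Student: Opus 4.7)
The plan is to prove the upper bound by induction on $\lambda$, paralleling the proof of Theorem~\ref{Thm: L_R for n=3} but using a rank-$n$ version of the key decomposition that produces a saving of $2$ in reflection length every $k-1$ steps in $\lambda$ rather than every $k$. For the base case $\lambda+\id_{r\geq 2}<k$ the indicator on the right-hand side vanishes and the claimed bound reduces to $\lambda(n-2)+r$. The word $\bs=(s_1\cdots s_n)^\lambda s_1\cdots s_r$ is $S$-reduced by Theorem~\ref{Thm: Speyer's Theorem}, and its longest two-letter alternation has length $\lambda+\id_{r\geq 2}+1<k+1$, so $\bs$ contains no consecutive subword of the form $(s_is_j)^k$. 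Corollary~\ref{Cor: l_R= l_Rn in our Situation} then yields $l_R(\omega(\bs))=l_{R_n}(\omega_n(\bs))=\lambda(n-2)+r$ with equality.

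For the inductive step, I treat $r=1$, $r=2$ and $r\geq 3$ separately. When $r=1$, conjugation by $s_1$ (Lemma~\ref{Lem: basic properties}(ii)) followed by the generator permutation $s_i\mapsto s_{i-1\bmod n}$, which is an $l_R$-preserving automorphism by Lemma~\ref{Lem: basic properties}(vi), gives
\[
l_R(\omega((s_1\cdots s_n)^{\lambda+1}s_1))=l_R(\omega((s_1\cdots s_n)^\lambda s_1\cdots s_{n-1})),
\]
and a direct calculation shows that the induction hypothesis at $(\lambda,n-1)$ produces exactly the target bound at $(\lambda+1,1)$. When $r\geq 3$, right multiplication by $s_r$ changes $l_R$ by $\pm 1$ (Lemma~\ref{Lem: basic properties}(iv)), and since the indicator and floor in the formula are independent of $r$ for $r\geq 2$, the bound increases by exactly $1$, closing this case.

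The crucial case is $r=2$: after moving $s_1s_2$ to the left by conjugation, I use the word-level identity
\[
s_1s_2(s_1\cdots s_n)^{\lambda+1}=[s_1s_2(s_1\cdots s_n)^{k-2}s_1s_2]\cdot[(s_3\cdots s_n s_1 s_2)^{\lambda+2-k}s_3\cdots s_n],
\]
valid for $\lambda\geq k-2$. Subadditivity (Lemma~\ref{Lem: basic properties}(iii)) reduces the estimate to bounding each factor separately. The second factor is $l_R$-equivalent, via permutation invariance, to $(s_1\cdots s_n)^{\lambda+2-k}s_1\cdots s_{n-2}$ and is controlled by the induction hypothesis at $(\lambda+2-k,n-2)$. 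The first factor is handled by the auxiliary inequality
\[
l_R(s_1s_2(s_1\cdots s_n)^{k-2}s_1s_2)\leq (k-2)(n-2),
\]
which I prove by conjugating to $(s_1\cdots s_n)^{k-2}(s_1s_2)^2$ and then deleting the $(k-2)(n-2)$ letters in $(s_1s_2\cdots s_n)^{k-2}s_1s_2s_1s_2$ distinct from $s_1$ and $s_2$; what remains is $(s_1s_2)^k$, which equals $\id$ by the braid relation of $W^n_k$. The argument underlying Lemma~\ref{Lem: After Dyer} applies to arbitrary (not necessarily reduced) words and converts this deletion into a reflection factorization of length $(k-2)(n-2)$. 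Routine but delicate arithmetic with the indicator/floor expressions then verifies that the two factors combine to match the target bound at $(\lambda+1,2)$.

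The main obstacle is precisely the auxiliary bound above, since naive subadditivity only yields $(k-2)(n-2)+2$; the gain of $2$ has to be extracted from the braid relation $(s_1s_2)^k=\id$. This same mechanism explains why the saving period in the rank-$n\geq 4$ formula is $k-1$ rather than the $k$ that appears in Theorem~\ref{Thm: L_R for n=3}: the recursion from $(\lambda+1,2)$ to $(\lambda+2-k,n-2)$ shifts $\lambda$ by $k-1$ in each iteration, and this $(k-1)$-step shift is reflected directly in the floor $\lfloor(\lambda-k+\id_{r\geq 2})/(k-1)\rfloor$.
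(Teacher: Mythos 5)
Your proposal is correct and follows essentially the same route as the paper: induction on $\lambda$ with the base case settled by Corollary~\ref{Cor: l_R= l_Rn in our Situation}, the $r=1$ case reduced by conjugation and generator permutation to $(\lambda-1,n-1)$, and the key step peeling off the prefix $s_1s_2(s_1\cdots s_n)^{k-2}s_1s_2$ and deleting the $(k-2)(n-2)$ letters outside $\{s_1,s_2\}$ so that $(s_1s_2)^k=\id$ yields the saving of $2$, recursing to $(\lambda-(k-1),n-2)$. The only cosmetic deviations are that you dispatch $r\geq 3$ via Lemma~\ref{Lem: basic properties}(iv) instead of folding the extra $r-2$ deletions into the general decomposition, and that you conjugate the prefix before deleting; the arithmetic case split at the end is the same.
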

\begin{proof}
The assertion is proven by induction over $\lambda$.
If $(\lambda+\id_{r\geq 2})$ is strictly smaller than $k$, the theorem is true because of Corollary \ref{Cor: l_R= l_Rn in our Situation} and Lemma \ref{Lem: l_R in univ. group, powers of Cox element}. For the parameters we have $k\geq 3$ and $n\geq 4$.\par
Assume that the statement of the theorem is true for all $\lambda'< \lambda$ and all $1\leq r  \leq n$ with $\lambda, \lambda'\in \mathbb{N}$. For the reflection length of the element $w=\omega((s_1s_2\cdots s_n)^\lambda s_1)$ we have
\[
l_R(w)= l_R(\omega((s_2\cdots s_ns_1)^{\lambda-1}s_2\cdots s_n)),%=l_R((s_1s_2\cdots s_n)^{n-1}s_1\cdots s_{n-1}),
\]
because conjugacy in general and permuting the generators in a \sib Coxeter group preserves reflection length (see Lemma~\ref{Lem: basic properties}).  The induction hypothesis gives us
\begin{align*}
&l_R(w)%=l_R((s_1s_2\cdots s_n)^{n-1}s_1\cdots s_{n-1})\\
= (\lambda-1)\cdot (n-2)+(n-1)-2\cdot \id_{(\lambda-1+1)\geq k}\left(1+ \floor*{\frac{\lambda-1-k+1}{k-1}}\right)\\
&= \lambda\cdot (n-2)+1-2\cdot \id_{(\lambda+\id_{r\geq 2})\geq k}\left(1+ \floor*{\frac{\lambda-k}{k-1}}\right).
\end{align*}
Thus, the theorem is true for $\lambda$ and $r=1$.\par 
To make a second induction argument according to $r$ for a fixed $\lambda$, assume that the statement of the theorem is true for all $\lambda'< \lambda$ and all $r'$ as well as for $\lambda'=\lambda$ and all $r'$ with $1<r'< r$. For the reflection length of the element $w=\omega((s_1s_2\cdots s_n)^\lambda s_1\cdots s_r)$, we have
\begin{align*}
l_R(w)= l_R(\omega(s_1\cdots s_r (s_1\cdots s_n)^{k-2} s_1s_2(s_3\cdots s_2)^{\lambda-(k-1)} s_3\cdots s_n)), 
\end{align*}
because conjugacy preserves reflection length. All exponents are non-negative since $(k-1)\leq \lambda$ (the other case is covered by the induction hypothesis). We obtain the identity element from the consecutive subword $s_1\cdots s_r (s_1\cdots s_n)^{k-2} s_1s_2$ if we remove all letters distinct from $s_1$ and $s_2$ in it. This is true, because $\omega((s_1s_2)^k)= \id$ in $W^n_k$. Hence, we have the following inequality
\begin{align*}
l_R(w)\leq  (k-2)\cdot (n-2) +(r-2)+l_R(\omega(s_3\cdots s_2)^{\lambda-(k-1)} s_3\cdots s_n)).
\end{align*} Permuting generators in a \sib Coxeter group does not change the reflection length (see Lemma~\ref{Lem: basic properties}). Together with the induction hypothesis, this implies
\begin{align*}
l_R(w)\leq\; & (k-2)\cdot (n-2) +(r-2) + (\lambda-(k-1))\cdot (n-2)+(n-2)\\
 & -2\cdot \id_{(\lambda-(k-1)+1)\geq k}\left(1+ \floor*{\frac{\lambda-(k-1)-k+\id_{r\geq 2}}{k-1}}\right)\\
 = \; &\; \lambda\cdot (n-2) + r -2 \\
  & -2\cdot \id_{(\lambda-(k-1)+1)\geq k}\left(1+ \floor*{\frac{\lambda-(k-1)-k+\id_{r\geq 2}}{k-1}}\right).
\end{align*}
There are two cases to be distinguished. The induction start is done for all words $(s_1s_2\cdots s_n)^{\tilde{\lambda}} s_1\cdots s_{\tilde{r}}$ with $(\tilde{\lambda}+\id_{\tilde{r}\geq 2})<k$. So we assume $(\lambda+\id_{r\geq 2})\geq k$.\par 
 In the case where $(\lambda-(k-1)+1)< k$, it follows that $\lambda-(k-1)<(k-1)$. This implies $\floor*{\frac{\lambda-k+1}{k-1}}= 0$ and we have 
\begin{align*}
l_R(w) &\leq \;  \lambda\cdot (n-2) + r -2\\
 &= \lambda(n-2)+r-2\cdot\id_{(\lambda+\id_{r\geq 2})\geq k}\left( 1+ \floor*{\frac{\lambda-k+\id_{r\geq 2}}{k-1}}\right).
\end{align*}
Else  $(\lambda-(k-1)+1)\geq k$ and it follows directly
\begin{align*}
l_R(w)&\leq\;  \lambda\cdot (n-2) + r -2 -2\cdot \left(1+ \floor*{\frac{\lambda-(k-1)-k+\id_{r\geq 2}}{k-1}}\right)\\
 &=  \lambda\cdot (n-2) + r -2\cdot \left(1+ \floor*{\frac{\lambda-k+\id_{r\geq 2}}{k-1}}\right)\\
 &= \lambda(n-2)+r-2\cdot\id_{(\lambda+\id_{r\geq 2})\geq k}\left(1 + \floor*{\frac{\lambda-k+\id_{r\geq 2}}{k-1}}\right).
\end{align*}
In total, the inequality
\[
l_R((s_1s_2\cdots s_n)^\lambda s_1\cdots s_r)\leq \lambda(n-2)+r-2\cdot\id_{(\lambda+\id_{r\geq 2})\geq k}\left(1+ \floor*{\frac{\lambda-k+\id_{r\geq 2}}{k-1}}\right)
\]
is proven by induction and thus the theorem, too.
\end{proof}

\begin{Rem}
The question of how to connect the upper and the lower bound remains. The lower bound obtained from the proof of Theorem~\ref{Thm: unbounded reflection length for powers of Coxeter elements} for elements of the form $w=\omega((s_1s_2\cdots s_n)^\lambda)$ in a \sib Coxeter group $W^n_k$ is
\[
(\lambda-1)\cdot(n-2)+n -2\cdot \frac{\lambda\cdot n}{\frac{k-1}{2}\cdot n+1}\leq l_R(w)
\]
for odd $k$. The negative term of his lower bound is roughly double the negative term in the upper bound from Theorem~\ref{Thm: L_R for n>=4} 
\[
l_R(w)\leq (\lambda-1)\cdot(n-2)+n-2\cdot\id_{\lambda\geq k}\cdot \left(1+ \floor*{\frac{\lambda-k}{k-1}}\right)
\]
(also true for Theorem~\ref{Thm: L_R for n=3}). For the lower bound, the negative part of the term counts subwords of the form $\mathbf{b}_{ij}$ of word length $k$. Whereas for the upper bound, the negative part of the term counts subwords of the from $(s_is_j)^k$ of word length $2k$.\par
Our computations of $l_R(\omega((s_1s_2\cdots s_n)^\lambda))$ for small $\lambda$ in different \sib Coxeter groups show in all instances that the upper bounds established in this section are exactly the reflection length.\par
Based on this and Lemma~\ref{Lem: r_R = univ. CoxGrp.}, we conjecture that the upper bounds from Theorem~\ref{Thm: L_R for n=3} and Theorem~\ref{Thm: L_R for n>=4} are equal to the reflection length function itself.
\end{Rem}

\section{The general relation between reflection length in arbitrary and universal Coxeter groups}
The results obtained in this work are mostly based on the comparison between the reflection length of elements that are represented by the same word in an arbitrary and in the universal Coxeter group of the same rank. 
The statement of the following conjecture implies a complete understanding of the relationship between the reflection length function in these different Coxeter groups. 

\begin{Con} \label{Conj: Dyer Conj}
Let $W=\langle S\mid\mathcal{R}\rangle$ be a Coxeter group and $w\in W$ be an element. Further, let $u_1\cdots u_p$ be a $S$-reduced expression for $w$ in $W$ with $u_i\in S$. There exists a letter $s$ in $u_1\cdots u_p$ such that omitting it results in 
\begin{align*}
l_R(\omega(u_1\cdots \hat s\cdots u_p))&=l_R(w)-1\text{ and }\\
l_{R_n}(\omega_n(u_1\cdots \hat s\cdots u_p))&=l_{R_n}(\omega_n(u_1\cdots u_p))-1.
\end{align*}
\end{Con}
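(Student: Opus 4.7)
The plan is to exploit the fact (Lemma~\ref{Lem: basic properties}(iv)) that removing any letter from a reduced word changes each reflection length by exactly $\pm 1$, because omitting the letter in position $i$ multiplies the represented element on the left by the reflection $r_i=u_1\cdots u_{i-1}u_iu_{i-1}\cdots u_1$. The conjecture thus reduces to finding one position for which the change is $-1$ in both $W$ and $W_n$. By Lemma~\ref{Lem: After Dyer}, this is equivalent to finding a position that lies simultaneously in a minimal deletion set in $W$ and in $W_n$ in the sense of Theorem~\ref{Thm: Dyer's Theorem}.

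My first step is to settle the reflection case $l_R(w)=1$, which is the only case the present paper claims. The strong exchange condition gives a unique index $i$ with $\omega(u_1\cdots\hat u_i\cdots u_p)=\id$ in~$W$. I split on whether $\bs$ is a palindrome. If it is, the centre $i=(p+1)/2$ is the deletion position, and pairing $u_j$ with $u_{p+1-j}$ from the inside out reduces the remaining word to the empty word by nil-moves alone. Since nil-moves are valid in $W_n$ as well, the same reduction yields $\omega_n(u_1\cdots\hat u_i\cdots u_p)=\id$, and the conjecture holds with $s=u_i$.

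If $\bs$ is not a palindrome, then $\bs$ is still the unique $S$-reduced expression for $\omega_n(\bs)$ in $W_n$, so $\omega_n(\bs)$ is not a reflection there and $l_{R_n}(\omega_n(\bs))\geq 2$. The strategy is to fix a palindromic $S$-reduced expression $\bs^\ast$ of $w$, which exists by writing $w=vtv^{-1}$ for some $t\in S$ with $\ell(w)=2\ell(v)+1$, and to invoke Theorem~\ref{Thm: solution word problem}(ii) to connect $\bs^\ast$ to $\bs$ by a sequence of braid-moves. I would track the central position of $\bs^\ast$ through this sequence: when a braid-move does not touch the tracked position, the position is preserved; when it does, the palindromic symmetry around the tracked position gets transported to the relabelled centre of the transformed subword. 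The position reached in $\bs$ is the unique $W$-deletion index $i$, and one then argues inductively along the sequence that this index continues to lie in some minimal $W_n$-deletion set after each braid-move.

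The main obstacle is this last claim: braid-moves do not exist in $W_n$, so one must demonstrate directly that the tracked index remains in some $W_n$-deletion set after each braid-move step, using only the words before and after the move together with the $\pm 1$ stability of $l_{R_n}$. For the full conjecture with $l_R(w)>1$ the natural approach is induction on $l_R(w)$: find a letter whose removal is known to lie in both a $W$- and a $W_n$-deletion set, delete it, and apply the hypothesis to the shorter element. However, no combinatorial link between minimal deletion sets in $W$ and in $W_n$ is available for arbitrary $w$, which is why the statement remains a conjecture; making the inductive choice of letter is the essential open difficulty.
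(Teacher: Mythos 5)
There is a genuine gap, and it sits exactly where you flag it. Your reduction of the conjecture to finding a common deletion position, and your treatment of the palindromic case by cancelling the centre with nil-moves, are both fine and agree with the base case of the paper's argument. But in the non-palindromic case your proof stops at the decisive step: you propose to track the central position of a palindromic reduced expression $\bs^\ast$ through the braid-moves connecting $\bs^\ast$ to $\bs$, and then you state, without proof, that the tracked index ``continues to lie in some minimal $W_n$-deletion set after each braid-move.'' Since $l_{R_n}(\omega_n(\bs))$ can be any odd number $\geq 3$ here, the $\pm 1$ stability of $l_{R_n}$ alone does not force the sign to be $-1$ at that position, and nothing in your argument rules out that a braid-move turns the tracked letter into one whose removal \emph{increases} $l_{R_n}$. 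So the only nontrivial part of the claimed special case is left unestablished.

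The paper closes precisely this hole with the notion of a \emph{twisted-palindrome} (Definition~\ref{Def: quasi-palindrome}) and Lemma~\ref{Lem: Removing middle from quasi-palindrome}. The point is that a braid-move applied across the centre of a palindrome does not destroy the symmetric structure arbitrarily: it converts a flanking pair $(\bs_i,\bs_i^{-1})$ into a pair satisfying condition (ii) of that definition (equal for even length, related by swapping the two letters for odd length), so every reduced expression of a reflection retains the shape $\bs_1\cdots\bs_n\cdot s\cdot\bs_{-n}\cdots\bs_{-1}$. The lemma then proves, by induction on $l_{R_n}$, that adding such a pair around a twisted-palindrome raises $l_{R_n}$ by exactly $2$ and that one can peel these pairs off inside a single $W_n$-deletion set containing the middle letter $s$; this is the rigorous version of your ``remains in a deletion set'' claim. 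If you want to complete your route, you should prove an analogue of that lemma; as written, your argument establishes the conjecture only for reflections admitting a palindromic reduced expression equal to the given one.
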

A weaker version would be that every element has a $S$-reduced expression for which the statement of the conjecture is true. If the reflection length in $W$ is $1$, the conjecture is true. For the proof of this, we need the following definition.

\begin{Def}\label{Def: quasi-palindrome}
We define pairs of reduced words $(\bs_i, \bs_{-i})$ with $i \in \NN$ as words over an alphabet $S$ with relation $s=s^{-1}$ for all $s\in S$ such that one of the following conditions hold 
\begin{itemize}
\item[(i)]$\bs_{-i}=\bs_i^{-1}$,
\item[(ii)] for two letters $s_1, s_2\in S$ we have $\bs_i\in \{s_1, s_2\}^*$, $l_S(\bs_i)\geq 2$ and 
\[
\bs_{-i}= \begin{cases}
	\tau_{1,2}(\bs_i)\quad &\text{for odd word length}\\
	 \bs_i \quad &\text{for even word length}
	 \end{cases},
	 \]
where $\tau_{1,2}: S^*\to S^*$ exchanges $s_1$ and $s_2$.
\end{itemize}
For an $s\in S$ we define a \textit{twisted-palindrome} %\com{/quasi-palindrome/almost-palindrome}} 
of odd word length to be a word 
\[
\bs_1\cdots \bs_n\cdot s \cdot \bs_{-n} \cdots \bs_{-1},
\]
where $(\bs_i, \bs_{-i})$ satisfies (i) or (ii) for all $1\leq i \leq n$. 
\end{Def}
\begin{Rem}
Twisted-palindromes are special cases of twisted conjugates of the generators in $S$ if $(W,S)$ is a universal Coxeter system. Condition $(i)$ and $(ii)$ are disjoint. 
\end{Rem}

\begin{Lem}\label{Lem: Removing middle from quasi-palindrome}
Let $\bs_1\cdots \bs_n\cdot s \cdot \bs_{-n} \cdots \bs_{-1}$ be a twisted-palindrome. For the element $t= \omega_n( \bs_1\cdots \bs_n\cdot s \cdot \bs_{-n} \cdots \bs_{-1})$ represented by this word the following equation holds:
\[
l_{R_n}(\omega_n(\bs_1\cdots \bs_n\cdot \hat{s} \cdot \bs_{-n} \cdots \bs_{-1}))= l_{R_n}(t)-1,
\]
where the hat over $s$ means omitting $s$. 
\end{Lem}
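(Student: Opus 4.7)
The plan is to prove the lemma by induction on $n$, the number of pairs in the twisted-palindrome. The base case $n = 0$ is immediate: $\bs = s$ represents a reflection with $l_{R_n}(s) = 1$, and omitting $s$ gives the identity, of length $0$. For the inductive step, I would write $\bs = \bs_1\cdot\bs''\cdot\bs_{-1}$, where $\bs'' = \bs_2\cdots\bs_n\cdot s\cdot\bs_{-n}\cdots\bs_{-2}$ is the inner twisted-palindrome on $n-1$ pairs, and set $g = \omega_n(\bs_1)$, $h = \omega_n(\bs_{-1})$, $t = \omega_n(\bs)$, $t'' = \omega_n(\bs'')$.

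In case (i) of Definition~\ref{Def: quasi-palindrome}, one has $\bs_{-1} = \bs_1^{-1}$ so $h = g^{-1}$. Then $t = g\, t''\, g^{-1}$ and $\omega_n(\bs\setminus s) = g\,\omega_n(\bs''\setminus s)\,g^{-1}$ are both $g$-conjugates of elements to which the inductive hypothesis applies. Conjugation invariance of reflection length (Lemma~\ref{Lem: basic properties}(ii)) yields $l_{R_n}(\omega_n(\bs\setminus s)) = l_{R_n}(t'') - 1 = l_{R_n}(t) - 1$, handling this case.

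The main case is (ii). Here $\bs_1, \bs_{-1}$ are alternating words of $S$-length at least $2$ in two generators $s_1, s_2 \in S$, so $g, h$ lie in the dihedral subgroup $\langle s_1, s_2\rangle \cong W_2$. A direct dihedral computation shows $gh = (s_1 s_2)^j$ for some $j \geq 1$, a translation with $l_{R_n}(gh) = 2$. Decomposing $t = (g t'' g^{-1})(gh)$ and $\omega_n(\bs\setminus s) = (g\,\omega_n(\bs''\setminus s)\,g^{-1})(gh)$, the inductive hypothesis applied to $t''$ furnishes reflection factorizations of these elements of lengths $l_{R_n}(t'') + 2$ and $l_{R_n}(t'') + 1$, respectively, giving matching upper bounds. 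Since omitting $s$ multiplies $t$ by a single reflection, Lemma~\ref{Lem: basic properties}(iv) forces $l_{R_n}(\omega_n(\bs\setminus s))$ and $l_{R_n}(t)$ to differ by exactly $1$; the lemma will follow once the lower bound $l_{R_n}(t) \geq l_{R_n}(t'') + 2$ is established.

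The hard part will be this lower bound. My strategy is to invoke Dyer's theorem (Theorem~\ref{Thm: Dyer's Theorem}) together with the especially simple word problem in $W_n$: for any $S$-reduced expression of length $p$, the reflection length equals $p - 2M^*$, where $M^*$ is the maximal size of a non-crossing matching of equal letters whose matched positions together represent the identity (after first reducing $\bs$ by nil-moves if necessary to obtain an $S$-reduced expression of $t$). The palindromic structure of $\bs$ should allow an inductive construction of a maximum matching that leaves the middle $s$ unmatched, and in case (ii) leaves exactly two extra letters of the outer pair $\bs_1\cdot\bs_{-1}$ unmatched beyond any maximum matching of $\bs''$—precisely reflecting the dihedral identity $l_{R_n}(gh) = 2$. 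This would force $M^*(\bs\setminus s) = M^*(\bs)$ in all cases, from which the desired lower bound, and hence the lemma, follows.
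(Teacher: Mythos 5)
Your base case and your treatment of a type-(i) outermost pair (peeling it off by conjugation invariance) are fine, but the pivot of your case-(ii) argument --- the lower bound $l_{R_n}(t)\geq l_{R_n}(t'')+2$, i.e.\ that appending the outermost type-(ii) pair raises the reflection length by exactly $2$ --- is false for twisted-palindromes as defined. Take $n=2$, $s=s_3$, $\bs_2=\bs_{-2}=s_1s_2$ and $\bs_1=\bs_{-1}=s_2s_1$ (both legitimate type-(ii) pairs of even word length; the definition only requires each $\bs_i$ to be reduced, not the concatenation). Then $\bs''=s_1s_2s_3s_1s_2$ has $l_{R_n}(t'')=3$, while the full word $s_2s_1\cdot s_1s_2\cdot s_3\cdot s_1s_2\cdot s_2s_1$ collapses under nil-moves to $s_3$, so $l_{R_n}(t)=1$, not $5$. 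Your upper bounds survive, but they no longer pin down $l_{R_n}(\omega_n(\bs\setminus s))$: knowing only $l_{R_n}(\omega_n(\bs\setminus s))\leq l_{R_n}(t'')+1$ and $l_{R_n}(\omega_n(\bs\setminus s))=l_{R_n}(t)\pm 1$ is compatible with the wrong answer whenever $l_{R_n}(t)<l_{R_n}(t'')+2$. The concluding sketch via a maximal non-crossing matching does not repair this, because it is presented as a consequence of the same per-pair ``$+2$'' analysis.

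This is precisely the trap the paper's proof is built to avoid: it inducts on the reflection length $l_{R_n}(t)=2k+1$ rather than on the number of pairs. Each pair changes the reflection length by an even amount of absolute value at most $2$ (type-(i) pairs by $0$), so if $l_{R_n}(t)>1$ there exists \emph{some} pair --- necessarily of type (ii), but not necessarily the outermost one --- whose removal (middle letters for odd length, first letters for even length) drops the reflection length by exactly $2$ and produces a shorter twisted-palindrome; the induction hypothesis and Lemma~\ref{Lem: After Dyer} then transfer the conclusion about the middle letter back to the original word. To salvage your induction on $n$ you would have to handle the cases $l_{R_n}(t)=l_{R_n}(t'')$ and $l_{R_n}(t)=l_{R_n}(t'')-2$ separately (or prove that a reduced twisted-palindrome never exhibits them, which your current text does not attempt), and as written the argument has a genuine gap.
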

\begin{proof}
Since the element $t$ has odd word length in $W_n$, we have $l_{R_n}(t)=2k+1$ for $k\in \NN_0$.
We prove the statement by induction over $k\in \NN_0$. For $k=0$, $l_{R_n}(t)$ is equal to $1$ and we know that $\bs_1\cdots \bs_n\cdot s \cdot \bs_{-n} \cdots \bs_{-1}$ is a palindrome since there are no braid relations in $W_n$. Thus, we have $\bs_{-i}=\bs_i^{-1}$ for all $i\in \{1,\dots, n\}$ and therefore $l_{R_n}(\omega_n(\bs_1\cdots \bs_n\cdot \hat{s} \cdot \bs_{-n} \cdots \bs_{-1}))=0$. \par
In general, if we have $l_{R_n}(\bs_i\bs\bs_{-i})\geq l_{R_n}(\bs)+2$ for a quasi-palindrome $\bs$ and a pair of words $(\bs_i, \bs_{-i})$ like in Definition \ref{Def: quasi-palindrome}, it follows that the inequality is an equality. The reflection length has to increase by an even number, because of parity reasons. The reflection length increases maximally by two since a word of the form $\bs_i\bs_{-i}$ has maximal reflection length $2$.\par 
The pair satisfies condition (ii) from Definition \ref{Def: quasi-palindrome}, since the two conditions are disjoint and conjugacy preserves reflection length (Lemma~\ref{Lem: basic properties}). To neutralize this effect on the reflection length of adding a pair of words $(\bs_i, \bs_{-i})$ in the outlined way, we distinguish two cases. If the word length of $\bs_i$ is odd and hence also the word length of $\bs_{-i}$, we remove the middle of both words and they vanish completely by applying $s=s^{-1}$ for all $s\in S$. Otherwise, if the word length of both is an even number, we remove the first letter in both cases and obtain a word that is conjugated to $\bs$. Conjugation preserves reflection length. We especially obtain a quasi-palindrome again. \par
Let $l_{R_n}(\bs_1\cdots \bs_n\cdot s \cdot \bs_{-n} \cdots \bs_{-1})=2k+1+2$. From above, we know that there exists a pair of words $(\bs_i, \bs_{-i})$ such that the reflection length will be reduced by two after omitting one letter in each word as described above. Moreover, we obtain an odd quasi palindrome with reflection length $2k+1$ and we can apply the induction assumption that by deleting the letter in the middle of the word we reduce the reflection length by one again. The deleted letters are elements in a deletion set of $\bs_1\cdots \bs_n\cdot s \cdot \bs_{-n} \cdots \bs_{-1}$ in $W_n$. According to Lemma \ref{Lem: After Dyer}, we have $l_{R_n}(\bs_1\cdots \bs_n\cdot \hat{s} \cdot \bs_{-n} \cdots \bs_{-1})= 2k+1$ and the induction is complete.
\end{proof}

\paragraph{\bf{Acknowledgements}} The idea for this project came during a two-month research stay at the University of Sydney with Anne Thomas founded by the
Deutsche Forschungsgemeinschaft (DFG, German Research Foundation) – 314838170, GRK 2297
MathCoRe. The author thanks Anne Thomas and the University of Sydney for their hospitality as well as the DFG. Further, the author thanks Petra Schwer for many helpful suggestions
during the preparation of the paper.
%The author is supported partially by the Deutsche Forschungsgemeinschaft (DFG, German Research Foundation) – 314838170, GRK 2297 MathCoRe.

\bibliographystyle{amsplain}
\bibliography{Literatur.bib}
\end{document}